\documentclass[lettersize,journal]{IEEEtran}

\usepackage{amsmath,amssymb,amsfonts,mathtools}
\usepackage{amsthm}
\usepackage{bm}
\newtheorem{theorem}{Theorem}

\newtheorem{example}{Example}
\theoremstyle{remark}
\newtheorem{remark}{\textbf{Remark}}

\usepackage{algorithm}
\usepackage{algpseudocode}

\usepackage{xcolor}     
\usepackage{multicol}   

\usepackage{graphicx}
\graphicspath{{Pictures/}}
\usepackage[caption=false,font=footnotesize,labelfont=sf,textfont=sf]{subfig}
\usepackage{stfloats}

\usepackage{url}
\usepackage{hyperref}

\usepackage{booktabs}
\usepackage{tabularx}
\usepackage{multirow}
\usepackage{makecell}
\usepackage{diagbox}
\usepackage{array}

\usepackage{colortbl}
\usepackage{adjustbox}
\usepackage{comment}
\usepackage{float}

\usepackage{cite}

\begin{document}

\title{Accelerating Regularized Attention Kernel Regression for Spectrum Cartography}

\author{Liping Tao and Chee Wei Tan, \textit{Senior Member, IEEE}

\thanks{Liping Tao (liping.tao@ntu.edu.sg) is with the College of Computing and Data Science, Nanyang Technological University, Singapore 639798.}

\thanks{Chee Wei Tan (cheewei.tan@ntu.edu.sg) is with the College of Computing and Data Science, Nanyang Technological University, Singapore 639798.}
}

\markboth{Journal of \LaTeX\ Class Files,~Vol.~14, No.~8, August~2021}%
{Shell \MakeLowercase{\textit{et al.}}: A Sample Article Using IEEEtran.cls for IEEE Journals}

\maketitle

\begin{abstract}
Spectrum cartography reconstructs spatial radio fields from sparse and heterogeneous wireless measurements, underpinning many sensing and optimization tasks in wireless networks. Attention mechanisms have recently enabled adaptive measurement aggregation via attention kernel-based formulations. However, the resulting exponential kernels exhibit severe spectral imbalance, inducing large condition numbers that render standard iterative solvers ineffective for regularized attention kernel regression. This paper proposes a Learning-based Attention Kernel Regression (LAKER) algorithm for accelerating regularized attention kernel regression in spectrum cartography. The key idea is to learn a data-dependent preconditioner that captures the inverse spectral structure of the attention kernel system, directly reducing the condition number bottleneck. The preconditioner is obtained by solving a regularized maximum-likelihood estimation problem via a shrinkage-regularized convex--concave procedure, and is integrated with a preconditioned conjugate gradient solver for efficient optimization, whose solution is used for radio map reconstruction. Extensive experiments demonstrate that LAKER significantly reduces condition numbers by up to three orders of magnitude, accelerates convergence by over twenty-fold compared to baselines, and maintains high reconstruction accuracy, establishing learning-based preconditioning as an effective approach for attention kernel regression in spectrum cartography.
\end{abstract}

\begin{IEEEkeywords}
Spectrum cartography, Radio map reconstruction, Kernel learning, Attention kernel regression, Difference-of-convex programming.
\end{IEEEkeywords}

\section{Introduction}
Spectrum cartography offers a principled framework for constructing spatially continuous radio environment representations from sparse measurements \cite{romero2022radio}. By estimating radio quantities across space, frequency, and time, it enables a broad range of applications including spectrum monitoring, interference management, localization, and resource allocation \cite{shrestha2023spectrum}. As wireless networks evolve toward Sixth-Generation (6G) systems with global coverage, integrated sensing, and AI-native designs, exemplified by simulation platforms such as NVIDIA Sionna \cite{chander2024sionna}, spectrum cartography is increasingly recognized as a measurement-driven inference problem involving sparse, heterogeneous observations rather than simple interpolation \cite{chen2025dynamic}. A central task within this framework is radio map reconstruction \cite{levie2021radiounet, romero2022radio}, which recovers spatial radio fields such as received signal strength, path loss, and power spectral density, from limited measurements \cite{shrestha2023spectrum, romero2024theoretical}. These maps serve as the foundational representation for downstream sensing, planning, and optimization, underscoring the need for accurate and computationally efficient reconstruction methods \cite{chen2025dynamic}.

A large body of work has investigated radio map reconstruction using model-based, interpolation-based, and learning-based approaches \cite{shrestha2023spectrum, romero2017learning}. Model-based methods rely on propagation assumptions, while interpolation-based techniques such as kernel smoothing estimate the field from nearby observations. Both categories often degrade under sparse measurements, model mismatch, or complex environments \cite{shrestha2023spectrum}. Learning-based approaches instead aim to infer the spatial field directly from data, for example via neural networks that map locations or environmental features to radio signals \cite{teganya2019location, jaensch2026radiomap, jaensch2026radio}. Among these, kernel-based learning methods stand out for their principled formulation, casting spectrum cartography as regularized regression in reproducing kernel Hilbert spaces \cite{romero2015stochastic, romero2017learning}. Specifically, Fig.~\ref{fig:sc_kernel_intro} illustrates this pipeline. Sparse received signal strength measurements in Fig.~\ref{fig:sc_kernel_intro} (a), simulated using NVIDIA Sionna RT over an urban scene, are used to reconstruct the radio map in Fig.~\ref{fig:sc_kernel_intro} (b). The radio field is modeled as $f(\mathbf{x}) = \sum_{i=1}^{n} \alpha_i k(\mathbf{x}, \mathbf{x}_i)$, where $k(\cdot,\cdot)$ captures spatial similarity between locations, as illustrated in Fig.~\ref{fig:sc_kernel_intro} (c). This formulation leads to the standard regularized kernel learning problem \cite{romero2022radio, teganya2019location, romero2017learning} (cf. Problem \eqref{prob:sc-primal}).

\begin{figure*}[t]
    \centering
    \includegraphics[width=\linewidth]{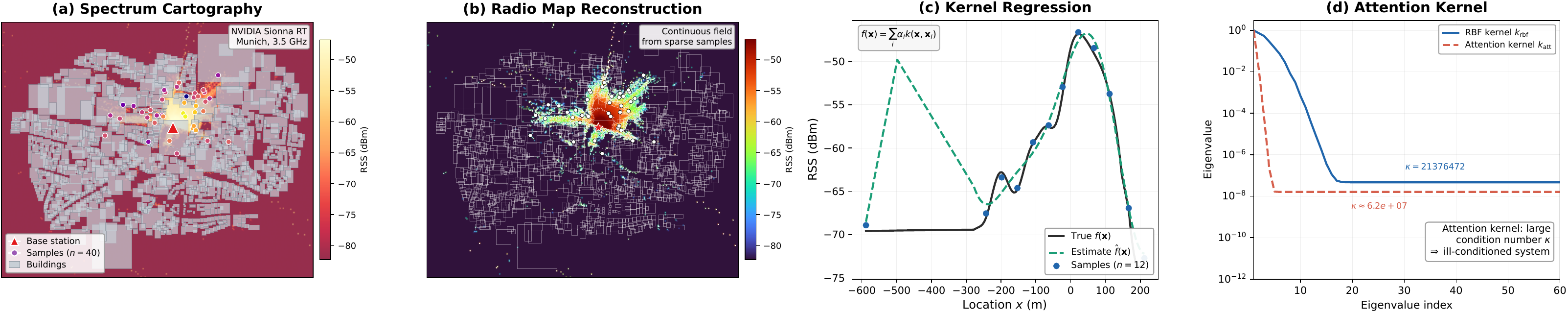}
    \caption{Spectrum cartography and kernel-based radio map reconstruction, simulated using \textit{NVIDIA Sionna RT}~\cite{chander2024sionna} over the Munich urban scene at $f_c=3.5$~GHz.
    \textbf{(a)}~Sparse received signal strength measurements at $n=40$ sensor locations.
    \textbf{(b)}~Reconstructed continuous radio map from sparse observations.
    \textbf{(c)}~Kernel regression view: the radio field $f(\mathbf{x})=\sum_{i=1}^{n}\alpha_i k(\mathbf{x},\mathbf{x}_i)$ estimated from $n=12$ samples along a cross-section, where $k(\mathbf{x}_i,\mathbf{x}_j)$ captures spatial similarity.
    \textbf{(d)}~Eigenvalue spectra of a Radial Basis Function (RBF) kernel $k_{\mathrm{rbf}}$, a standard stationary kernel with smoothly decaying spectrum, and an attention-induced kernel $k_{\mathrm{att}}$: the exponential attention transformation yields a severely imbalanced spectrum with $\kappa \gg 1$.}
    \label{fig:sc_kernel_intro}
\end{figure*}

More recently, attention mechanisms \cite{vaswani2017attention} have emerged as a data-adaptive framework for aggregating heterogeneous measurements, where interactions are governed by learned embeddings rather than fixed kernels \cite{viet2025spatial}. Representative examples include transformer-based spectrum cartography models such as STORM \cite{viet2025spatial} and related attention-driven approaches \cite{chen2024radio, tian2021transformer, liaq2026visual, pandey2021joint}, which demonstrate the effectiveness of embedding-based adaptive attention weighting. This naturally leads to attention kernels by replacing $k(\cdot,\cdot)$ with an attention-induced similarity measure, yielding attention kernel regression as an extension of kernel-based learning. Despite their expressive power, attention-induced kernels introduce significant computational challenges. In scaled dot-product attention, pairwise affinities are computed as inner products in a learned embedding space followed by exponential or softmax normalization \cite{vaswani2017attention}, which amplifies similarities and typically produces highly imbalanced spectra with a few dominant eigenvalues and many near-zero ones. This results in large condition numbers and numerical instability \cite{song2024solving}, as illustrated in Fig.~\ref{fig:sc_kernel_intro} (d). More broadly, the behavior of attention kernel regression is governed by spectral properties: learning progresses unevenly across eigenmodes, and components associated with small eigenvalues are harder to recover \cite{bordelon2020spectrum}. This effect depends on spectral decay, regularization, feature structure \cite{cheng2024comprehensive}, and alignment with the target function \cite{gavrilopoulos2025geometrical}, ultimately leading to severely ill-conditioned systems that challenge accurate and scalable optimization.

To reduce the computational cost of large-scale kernel regression problems, particularly those involving complex kernels such as attention-induced ones, prior work has developed various approximation and preconditioning techniques. One line of work employs low-rank approximations, such as Nystr\"om methods and random feature mappings, to replace the kernel matrix and reduce complexity \cite{drineas2005nystrom, rahimi2007random}. Another focuses on preconditioned iterative solvers, leveraging sketching or structured decompositions to accelerate conjugate-gradient-type methods \cite{avron2017faster,shabat2021fast}. More recent studies further explore data-aware and robust preconditioning strategies to improve stability under heterogeneous conditions \cite{diaz2023robust}. However, these approaches are primarily developed for classical kernels and standard kernel ridge regression. They typically rely on fixed kernel structures, low-rank approximations, or favorable spectral decay, and are not directly tailored to the non-stationary, data-adaptive kernels encountered in spectrum cartography, such as learnable attention-induced kernels, nor do they explicitly exploit the structure of attention kernel regression.

\vspace{1mm}
\textit{Contributions.}
To address the above challenge, we study attention kernel regression in spectrum cartography and propose a learning-based method that adapts to the inverse spectral structure of the attention kernel system to directly mitigate the condition number bottleneck induced by exponential attention kernels. The main contributions are as follows:
\begin{itemize}
    \item We formulate regularized attention kernel regression as a core computational problem for kernel learning-based radio map reconstruction, and characterize the severe spectral ill-conditioning induced by attention kernels.

    \item We develop the Learning-based Attention Kernel Regression (LAKER) algorithm, which learns a Difference-of-Convex (DC) programming--based preconditioner for the regularized attention kernel regression problem. LAKER constructs a regularized Maximum-Likelihood Estimation (MLE) formulation solved via the Convex--Concave Procedure (CCCP) with shrinkage, and integrates it with a Preconditioned Conjugate Gradient (PCG) solver, whose solution is directly used for radio map reconstruction.

    \item Extensive numerical example experiments on spectrum cartography tasks demonstrate that LAKER significantly improves spectral conditioning and solver efficiency across problem sizes. Against first-order methods, preconditioner methods, and convex solvers, LAKER achieves faster convergence and substantial speedup while maintaining high reconstruction accuracy. The source code is publicly available.\footnote{Source code is available at \url{https://github.com/convexsoft/kernelSC}.}
\end{itemize}

\section{Related Work}
\label{sec:rw}
\subsection{Kernel Regression Learning in Spectrum Cartography}
A fundamental formulation of spectrum cartography arises from kernel-based learning \cite{romero2022radio, scholkopf2002learning}, where the radio field is modeled as a function in a reproducing kernel Hilbert space. Given sparse measurements, estimation is cast as a regularized empirical risk minimization problem, whose solution admits a finite kernel expansion over training samples via the representer theorem \cite{scholkopf2001generalized}. The infinite-dimensional estimation problem thus reduces to solving a regularized linear system defined by a kernel matrix, exposing a common operator-inversion structure shared by a broad class of spectrum cartography methods. This formulation extends naturally to more structured settings. In semiparametric reproducing kernel Hilbert space regression \cite{romero2015stochastic}, the radio field is decomposed into parametric and nonparametric components to incorporate prior knowledge while preserving flexibility. Location-free spectrum cartography \cite{teganya2019location} replaces spatial coordinates with signal-derived features while retaining the same kernel regression structure. Kernel-based methods have further been generalized to practical scenarios such as learning power spectral density maps from quantized measurements \cite{romero2017learning}, employing vector-valued reproducing kernel Hilbert space formulations and support-vector-regression-type estimators. Despite these variations, the resulting solutions consistently reduce to solving regularized linear systems, revealing a unified computational structure underlying many spectrum cartography methods. This perspective exposes the central role of the kernel matrix in jointly determining statistical performance and computational complexity.

\subsection{Transformer Attention in Spectrum Cartography}
Transformer attention-based approaches have recently emerged as a powerful paradigm for spectrum cartography, where attention mechanisms play diverse functional roles beyond conventional representation learning. In particular, each spatial measurement or grid feature is first mapped to a latent embedding vector, which is further projected into query, key, and value representations to compute attention weights based on pairwise similarity \cite{vaswani2017attention, su2024roformer}. This embedding-based formulation induces a data-dependent similarity structure over the input samples, effectively defining an adaptive kernel in the learned feature space. Early works such as RadioTrans \cite{tian2021transformer} and subsequent CNN–Transformer hybrids \cite{chen2024radio, li2025rmtransformer} primarily employ self-attention to capture long-range spatial dependencies and enhance global context modeling in grid-based radio map reconstruction. Extending this line, STORM \cite{viet2025spatial} adopts a point-based formulation, where attention acts as a permutation-invariant measurement fusion operator, effectively serving as a learned, input-dependent spatial kernel for adaptive estimation. Beyond reconstruction, attention has also been utilized for higher-level interaction and decision-making tasks; for example, TxSTrans \cite{liaq2026visual} integrates self- and cross-attention to model environment–candidate interactions and enables efficient base station site selection. Furthermore, generative formulations have been explored, where attention functions as a spatial interpolation operator: \cite{pandey2021joint} employs self-attention to aggregate neighboring received signal strength samples and synthesize realistic radio fingerprints at unseen locations. These works collectively demonstrate that transformer attention in spectrum cartography can act as a unifying operator for estimation, representation, fusion, interpolation, and decision-making across different problem settings.

\subsection{Kernel Ridge Regression}
Kernel Ridge Regression (KRR) combines kernel methods with $\ell_2$ regularization and provides a fundamental framework for nonlinear function estimation. Recent advances show that its generalization behavior such as bias--variance trade-offs, benign overfitting, and multiple-descent is governed by the spectral properties of the kernel operator \cite{gavrilopoulos2025geometrical, cheng2024comprehensive}. On the computational side, scalable techniques, including divide-and-conquer and random-feature-based preconditioning, significantly improve efficiency while maintaining accuracy \cite{avron2017faster}. KRR has also been extended to applications such as dynamical system forecasting and structured learning \cite{bollt2020regularized}. However, attention-induced kernels introduce non-stationary and data-adaptive structures that challenge existing methods. This work addresses this issue by developing a learning-based preconditioning framework for attention kernel regression.

\section{Regularized Attention Kernel Regression}
\label{sec:ker}
Spectrum cartography aims to reconstruct a spatial radio field $r(\mathbf{x})$ from sparse measurements $\{(\mathbf{x}_i, y_i)\}_{i=1}^n$, where $\mathbf{x}_i \in \mathbb{R}^{d_x}$ denotes the measurement location and $y_i$ denotes the noisy observation at $\mathbf{x}_i$. A principled approach represents $r(\mathbf{x})$ by a function $f \in \mathcal{H}_k$ in a reproducing kernel Hilbert space $\mathcal{H}_k$, leading to the regularized minimization problem \cite{romero2022radio, teganya2019location, romero2017learning}:
\begin{equation}
\label{prob:sc-primal}
\min_{f \in \mathcal{H}_k}
\;
\sum_{i=1}^n \bigl(y_i - f(\mathbf{x}_i)\bigr)^2
\;+\;
\lambda \|f\|_{\mathcal{H}_k}^2,
\qquad \lambda > 0,
\end{equation}
which is also known as the kernel ridge regression problem \cite{scholkopf2002learning}. By the representer theorem \cite{scholkopf2001generalized}, its solution admits a finite expansion over the training samples:
\begin{equation}
f(\mathbf{x}) = \sum_{i=1}^n \alpha_i\, k(\mathbf{x}_i, \mathbf{x}),
\end{equation}
where $k(\cdot,\cdot)$ is a positive semidefinite kernel function. Substituting into \eqref{prob:sc-primal} yields the finite-dimensional problem:
\begin{equation}
\label{prob:sc-objective}
\min_{\boldsymbol{\alpha} \in \mathbb{R}^n}
\;
\|\mathbf{y} - K\boldsymbol{\alpha}\|_2^2
\;+\;
\lambda \boldsymbol{\alpha}^\top K \boldsymbol{\alpha},
\end{equation}
whose first-order optimality condition gives the regularized linear
system:
\begin{equation}
\label{eq:sc-krr}
(K + \lambda I)\boldsymbol{\alpha} = \mathbf{y},
\end{equation}
where $K + \lambda I$ is symmetric positive definite for any $\lambda > 0$, since $K$ is positive semidefinite.

\begin{figure}[t]
    \centering
    \includegraphics[width=1.0\linewidth]{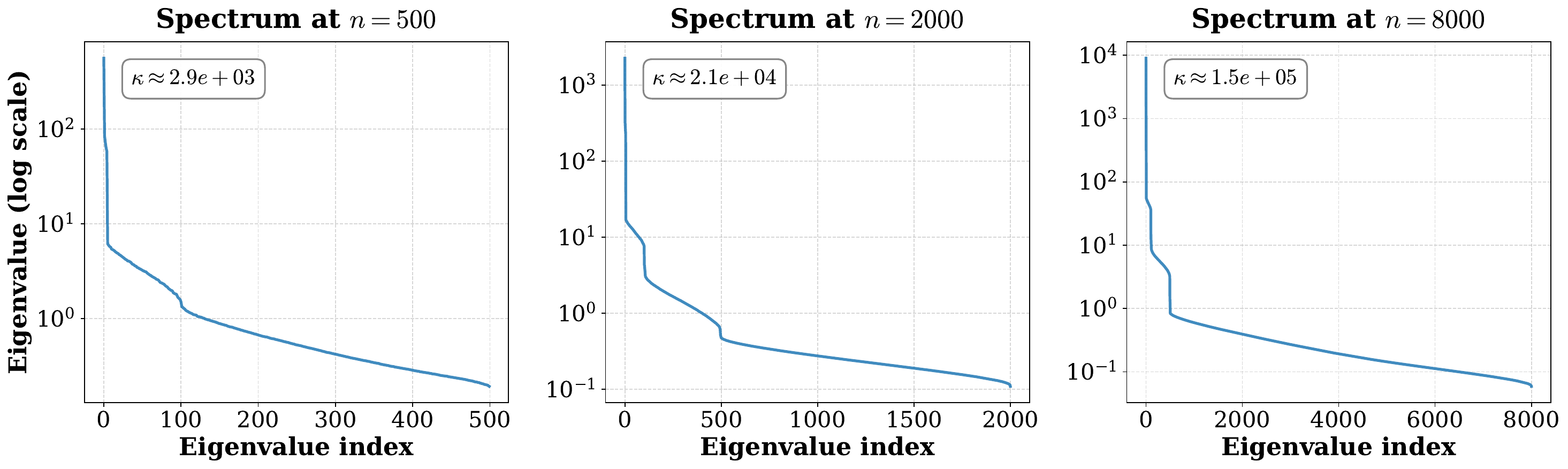}
    \caption{Spectrum of $(\lambda I + G)$ as $n$ increases from
    $500$ to $8000$ ($\lambda = 0.01$). The rapidly growing
    condition number $\kappa$ confirms that attention kernel
    regression remains increasingly ill-conditioned with scale,
    even after regularization.}
    \label{fig:exam-attention-kernel-spectrum}
\end{figure}

\subsection{Regularized Attention Kernel Regression}
In modern data-driven spectrum cartography, each measurement location $\mathbf{x}_i \in \mathbb{R}^{d_x}$ is associated with a learned feature embedding $\mathbf{e}_i \in \mathbb{R}^{d_e}$, obtained via a parametric mapping such as neural encoders or attention-based feature extraction. For example, transformer-based estimators such as STORM \cite{viet2025spatial} compute predictions via attention-weighted aggregation, where weights are defined through exponential similarities in a learned embedding space, effectively inducing a data-dependent similarity measure. Collecting the embeddings into the matrix $E = [\mathbf{e}_1^\top; \dots;\mathbf{e}_n^\top] \in \mathbb{R}^{n \times {d_e}}$, this perspective naturally leads to an exponential attention kernel \cite{song2024solving, viet2025spatial}:
\begin{equation}
\label{eq:sc-exp-kernel}
G_{ij} = \exp(\langle \mathbf{e}_i, \mathbf{e}_j \rangle),
\quad
G = \exp(EE^\top),
\end{equation}
where the exponential is applied element-wise. 

Replacing $K$ with $G$ in \eqref{prob:sc-objective} directly yields the regularized attention kernel regression problem:
\begin{equation}
\label{prob:sc-attn-kernel-regression}
\min_{\boldsymbol{\alpha} \in \mathbb{R}^n}
\;
\|G\boldsymbol{\alpha} - \mathbf{y}\|_2^2
\;+\;
\lambda \boldsymbol{\alpha}^\top G \boldsymbol{\alpha},
\end{equation}
where $\mathbf{y} \in \mathbb{R}^n$ collects the sparse noisy measurements. Its first-order optimality condition yields:
\begin{equation}
\label{eq:sc-final}
(G + \lambda I)\boldsymbol{\alpha} = \mathbf{y},
\end{equation}
where $G + \lambda I$ is symmetric positive definite for any $\lambda > 0$, and $\boldsymbol{\alpha}$ are the kernel expansion coefficients at the training locations. Once $\boldsymbol{\alpha}$ is obtained by solving \eqref{eq:sc-final}, the radio field is reconstructed at any query location $\mathbf{x}$ via:
\begin{align}
\label{eq:sc-radio-map-reconstruction}
\hat{r}(\mathbf{x})
=
\sum_{i=1}^{n}
G(\mathbf{x}, \mathbf{x}_i)\,\alpha_i,
\end{align}
where $G(\mathbf{x}, \mathbf{x}_i) = \exp(\langle \mathbf{e}(\mathbf{x}), \mathbf{e}_i \rangle)$ denotes the attention kernel evaluated between the query location $\mathbf{x}$ and training location $\mathbf{x}_i$, with $\mathbf{e}(\mathbf{x})$ the embedding of $\mathbf{x}$.

\subsection{Spectral Ill-Conditioning of Attention Kernels}
\label{sec:spec-discussion}
Regularized attention kernel regression leads to the linear system $(\lambda I + G)\boldsymbol{\alpha} = \mathbf{y}$, whose numerical behavior is governed by the spectrum of $G$. While adding $\lambda I$ ensures positive definiteness, it does not reduce the spectral imbalance induced by the exponential kernel. To illustrate this, consider an idealized clustered setting where $n$ samples form $Q$ well-separated groups of equal size $q = n/Q$. In this case, $G$ is approximately block-diagonal, with each block approximately rank-one. Consequently, each cluster contributes a single dominant eigenvalue of order $\mathcal{O}(n/Q)$, while the remaining eigenvalues are near zero before regularization and near $\lambda$ after. The condition number of the regularized system scales as:
\begin{equation}
\kappa(\lambda I + G)
\approx 1 + \frac{n}{Q\lambda},
\end{equation}
which grows linearly with the problem size $n$. Thus, even for fixed $\lambda > 0$, the system remains severely ill-conditioned as $n$ increases. This spectral imbalance significantly affects iterative solvers: components associated with dominant eigenvalues converge rapidly, whereas those associated with small eigenvalues converge slowly, leading to stagnation. As shown in Fig.~\ref{fig:exam-attention-kernel-spectrum}, the condition number grows rapidly with $n$, confirming that attention kernel regression remains increasingly ill-conditioned with scale.

\begin{figure*}[t]
    \centering
    \includegraphics[width=\linewidth]{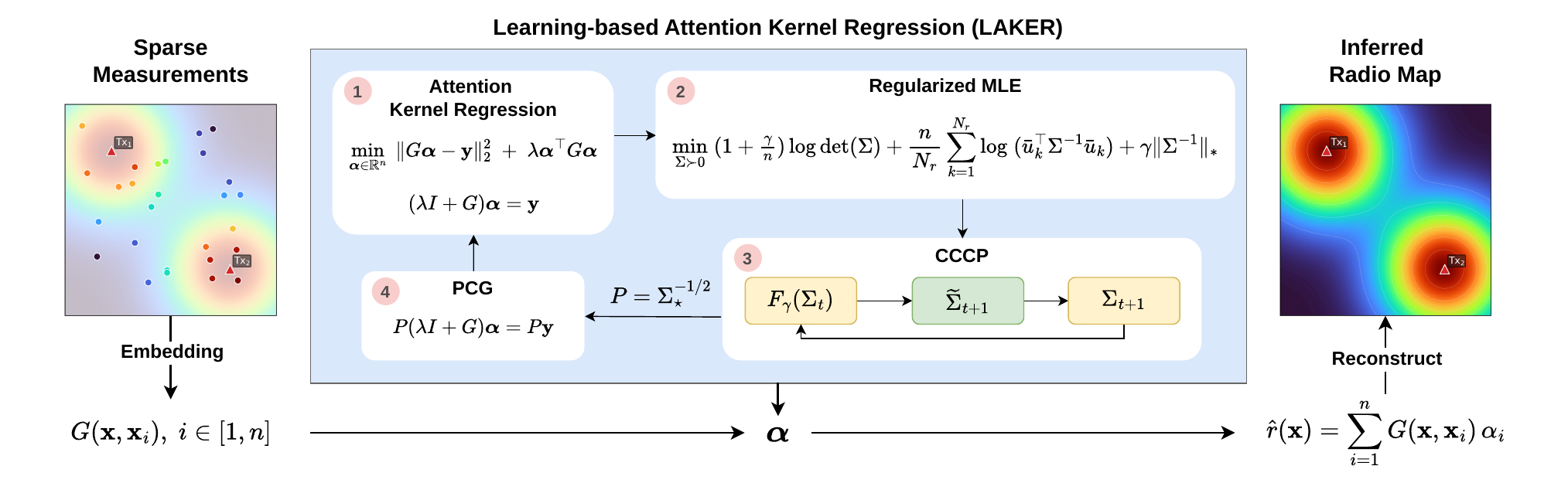}
    \caption{Overview of the Learning-based Attention Kernel Regression (LAKER) algorithm for radio map reconstruction in spectrum cartography. Sparse measurements are encoded via attention kernels to form the regression system. A regularized Maximum Likelihood Estimation (MLE) problem with a Difference-of-Convex (DC) structure is solved via Convex--Concave Procedure (CCCP) to learn a data-dependent preconditioner. The learned preconditioner is then used in a Preconditioned Conjugate Gradient (PCG) method to recover the coefficients, which are finally used for radio map reconstruction.}
    \label{fig:kernel_algorithm}
\end{figure*}

\vspace{1mm}
\textit{Preconditioning Objective.}
The ill-conditioning of attention kernel regression is intrinsic to the exponential kernel: the similarity amplification creates a few dominant eigenmodes and many near-zero ones, leading to severe eigenvalue imbalance even after adding $\lambda I$. To solve $(\lambda I + G)\boldsymbol{\alpha} = \mathbf{y}$ efficiently, a preconditioner $P$ is needed to reduce the condition number of the preconditioned system. The preconditioning objective is:
\begin{equation}
\label{eq:attention-kernel-objective}
\min_{P}\;\kappa\!\bigl(P(\lambda I + G)\bigr)
=
\frac{\lambda_{\max}\!\bigl(P(\lambda I + G)\bigr)}
     {\lambda_{\min}\!\bigl(P(\lambda I + G)\bigr)}.
\end{equation}

\section{Learning-Based Attention Kernel Regression}
This section presents the Learning-based Attention Kernel Regression (LAKER) algorithm for solving regularized attention kernel regression in spectrum cartography, with the overall procedure illustrated in Fig.~\ref{fig:kernel_algorithm} and Algorithm~\ref{alg:LAKER}. LAKER proceeds in three stages. In the first stage, the exponential attention kernel matrix $G$ is constructed from learned feature embeddings via $G_{ij} = \exp(\langle \mathbf{e}_i, \mathbf{e}_j \rangle)$. In the second stage, motivated by Tyler's estimator \cite{sun2014regularized, soloveychik2015tyler, palomar2025portfolio}, LAKER formulates a regularized MLE problem whose objective admits a DC structure \cite{tao1988duality, shen2016disciplined}. This DC problem is solved via the CCCP iteration \cite{lipp2016variations, yuille2003concave} to learn a symmetric positive definite matrix $\Sigma_{\star} \in \mathbb{R}^{n \times n}$, from which the preconditioner is constructed as $P = \Sigma_{\star}^{-1/2}$. The matrix $P$ is designed so that $P(\lambda I + G) \approx I$, directly reducing the condition number of the preconditioned system. The learned $P$ is used within the PCG method to efficiently solve:
\begin{align}
P(\lambda I + G)\boldsymbol{\alpha} = P\mathbf{y}.
\end{align}
In the final stage, the recovered coefficients $\boldsymbol{\alpha}$ are finally used to reconstruct the radio map via \eqref{eq:sc-radio-map-reconstruction}.

\subsection{Tyler's Estimator}
Tyler's estimator \cite{tyler1987statistical} is derived from the Angular Central Gaussian (ACG) model, which represents observations as scaled Gaussian vectors $s_i = \sqrt{\tau_i}\,\xi_i$, where $\xi_i \sim \mathcal{N}(0,\Sigma)$ and $\tau_i > 0$. The corresponding ACG negative log-likelihood is:
\begin{equation}
\ell(\Sigma) = \log\det(\Sigma)
+ \frac{n}{N_r} \sum_{i=1}^{N_r} \log(s_i^\top \Sigma^{-1} s_i).
\end{equation}

Note that $\ell(\Sigma)$ is scale-invariant, satisfying $\ell(c\Sigma) = \ell(\Sigma)$ for any $c > 0$. Setting its gradient to zero yields the fixed-point equation \cite{sun2014regularized}:
\begin{equation}
\Sigma = \frac{n}{N_r} \sum_{i=1}^{N_r}
\frac{s_i s_i^\top}{s_i^\top \Sigma^{-1} s_i}.
\label{eq:tyler-fixed-point}
\end{equation}
Since the fixed-point equation determines $\Sigma$ only up to a positive scalar, the constraint $\operatorname{tr}(\Sigma) = n$ is imposed to ensure a unique solution.

\subsection{Regularized Maximum-Likelihood Estimation}
Given the attention kernel operator $\lambda I+G\succ0$ and $(\lambda I + G) \in \mathbb{R}^{n \times n}$, for $k=\{1,\dots,N_r$\}, we generate random directions by:
\begin{align}
\label{eq:random_directions}
z_k \sim \mathcal{N}(0,I_n), 
\quad
u_k = (\lambda I + G) z_k,
\quad
\bar u_k = \frac{u_k}{\|u_k\|_2},
\end{align}
where $k$ indexes the random directions.

Since $u_k \sim \mathcal{N}(0,(\lambda I + G)^2)$, the normalized vectors $\bar u_k$ follow an angular distribution whose shape is governed by $(\lambda I + G)^2$. Based on these samples, we estimate a positive definite matrix $\Sigma$ via the regularized MLE problem:
\begin{align}
\min_{\Sigma\succ0}\;
\bigl(1+\tfrac{\gamma}{n}\bigr)\log\det(\Sigma)
+
\frac{n}{N_r}\sum_{k=1}^{N_r}
\log\!\bigl(\bar u_k^\top \Sigma^{-1}\bar u_k\bigr)
+
\gamma \|\Sigma^{-1}\|_*,
\label{prob:kernel-mle}
\end{align}
where $\|\cdot\|_*$ denotes the nuclear norm. Since $\Sigma^{-1}\succ0$, there exists $\|\Sigma^{-1}\|_*=\mathrm{tr}(\Sigma^{-1})$.

\begin{remark}
Since $\Sigma \succ 0$, the inverse $\Sigma^{-1}$ is symmetric positive definite. Therefore, its singular values are exactly its positive eigenvalues, and one has:
\begin{align}
\|\Sigma^{-1}\|_* = &\sum_{i=1}^n \sigma_i(\Sigma^{-1})
= \sum_{i=1}^n \sqrt{\lambda_i((\Sigma^{-1})^{\top}(\Sigma^{-1}))} \notag \\
= &\sum_{i=1}^n \lambda_i(\Sigma^{-1})
= \mathrm{tr}(\Sigma^{-1}).
\end{align}
Thus, $\|\Sigma^{-1}\|_*=\mathrm{tr}(\Sigma^{-1})$.
\end{remark}

Obviously, the estimated matrix $\Sigma$ captures the spectral structure of $(\lambda I + G)$ and satisfies $\Sigma \propto (\lambda I + G)^2$. This implies that $\Sigma$ shares the same eigenvectors as $(\lambda I + G)$, while its eigenvalues scale quadratically with those of $(\lambda I + G)$. Thus, we construct the preconditioner as:
\begin{align}
\label{eq:preconditioner-Sigmma}
P = \Sigma_{\star}^{-1/2},
\end{align}
where $\Sigma_{\star}$ is the solution of \eqref{prob:kernel-mle}. \eqref{eq:preconditioner-Sigmma} effectively approximates $(\lambda I + G)^{-1}$ up to a scaling factor. Since such scaling does not affect the condition number, the resulting preconditioner is spectrally aligned with the inverse operator and significantly improves the condition number of $(\lambda I+G)\boldsymbol{\alpha}=\mathbf{y}$.

\begin{algorithm}[t]
\caption{Learning-based Attention Kernel Regression (LAKER)}
\label{alg:LAKER}
\begin{algorithmic}[1]
\Require $\{\mathbf{x}_i\}_{i=1}^n$, $\mathbf{y}$, $\lambda$, $N_r$, $\gamma$.
\Ensure $\boldsymbol{\alpha}$ and $\hat{r}(\mathbf{x})$.

\State \textbf{Kernel Construction}
\State Construct embeddings $\{\mathbf{e}_i\}_{i=1}^n$ from $\{\mathbf{x}_i\}_{i=1}^n$;
\State Compute $G_{ij} = \exp(\langle \mathbf{e}_i, \mathbf{e}_j \rangle)$;

\vspace{1mm}
\State \textbf{Attention Kernel Regression Learning}
\State Sample $z_k \sim \mathcal{N}(0,I_n)$ for $k=1,\dots,N_r$;
\State $u_k \leftarrow (\lambda I + G) z_k$, \quad $\bar u_k \leftarrow u_k / \|u_k\|_2$;
\State Initialize $\Sigma_0 \leftarrow I$;
\For{$t = 0,1,\dots$ until convergence}
    \State Compute $F_\gamma(\Sigma_t)$ via \eqref{eq:kernel_shrinkage_cccp_F};
    \State $\widetilde{\Sigma}_{t+1} \leftarrow (1-\rho)F_\gamma(\Sigma_t) + \rho I$;
    \State $\Sigma_{t+1} \leftarrow \widetilde{\Sigma}_{t+1} / \big(\mathrm{tr}(\widetilde{\Sigma}_{t+1})/n\big)$;
\EndFor
\State $P \leftarrow \Sigma_{\star}^{-1/2}$;

\vspace{1mm}
\State \textbf{Preconditioned Conjugate Gradient}
\State Initialize $\boldsymbol{\alpha}^0 \leftarrow \mathbf{0}$,
$\mathbf{r}^0 \leftarrow \mathbf{y}$,
$\boldsymbol{\vartheta}^0 \leftarrow P \mathbf{r}^0$,
$\mathbf{p}^0 \leftarrow \boldsymbol{\vartheta}^0$;
\For{$k = 0,1,\dots$ until convergence}
    \State $\delta^k \leftarrow \frac{(\mathbf{r}^k)^\top \boldsymbol{\vartheta}^k}{(\mathbf{p}^k)^\top (\lambda I + G)\mathbf{p}^k}$;
    \State $\boldsymbol{\alpha}^{k+1} \leftarrow \boldsymbol{\alpha}^k + \delta^k \mathbf{p}^k$;
    \State $\mathbf{r}^{k+1} \leftarrow \mathbf{r}^k - \delta^k (\lambda I + G)\mathbf{p}^k$;
    \If{$\|\mathbf{r}^{k+1}\|_2 / \|\mathbf{y}\|_2 \le \varepsilon_{\text{tol}}$}
        \State \textbf{break}
    \EndIf
    \State $\boldsymbol{\vartheta}^{k+1} \leftarrow P \mathbf{r}^{k+1}$;
    \State $\beta^k \leftarrow \frac{(\mathbf{r}^{k+1})^\top \boldsymbol{\vartheta}^{k+1}}{(\mathbf{r}^k)^\top \boldsymbol{\vartheta}^k}$;
    \State $\mathbf{p}^{k+1} \leftarrow \boldsymbol{\vartheta}^{k+1} + \beta^k \mathbf{p}^k$;
\EndFor

\vspace{1mm}
\State \textbf{Radio Map Reconstruction}
\State Reconstruct $\hat{r}(\mathbf{x})$ via $\hat{r}(\mathbf{x}) = \sum_{i=1}^{n} G(\mathbf{x}, \mathbf{x}_i)\,\alpha_i$;
\State \Return $\boldsymbol{\alpha}$ and $\hat{r}(\mathbf{x})$.
\end{algorithmic}
\end{algorithm}

\subsection{Convex--Concave Procedure Iteration}
Let $\Theta=\Sigma^{-1} \succ 0$, then the problem \eqref{prob:kernel-mle} becomes:
\begin{align}
\min_{\Theta\succ0}\;
-
\bigl(1+\tfrac{\gamma}{n}\bigr)\log\det(\Theta)
+
\frac{n}{N_r}\sum_{k=1}^{N_r}
\log\!\bigl(\bar u_k^\top \Theta \bar u_k\bigr)
+
\gamma \|\Theta\|_*,
\label{eq:kernel-mle-theta}
\end{align}
which is obviously a non-convex problem. We further define:
\begin{align}
l(\Theta) &= -\Bigl(1+\tfrac{\gamma}{n}\Bigr)\log\det(\Theta),\\
g(\Theta) &= 
\frac{n}{N_r}\sum_{k=1}^{N_r}\log\!\bigl(\bar u_k^\top \Theta \bar u_k\bigr)
+ \gamma \operatorname{tr}(\Theta). \label{eq:cccp-g-theta}
\end{align}
Here, $l(\Theta)$ is convex and $g(\Theta)$ is concave. Then~\eqref{eq:kernel-mle-theta} can be written as:
\begin{align}
\label{eq:cccp-L-theta}
\min_{\Theta\succ0}\;
L(\Theta)
=
l(\Theta) + g(\Theta).
\end{align}

Defining $h(\Theta) := -g(\Theta)$, we obtain the DC program \cite{tao1988duality, shen2016disciplined}:
\begin{align}
\label{eq:dc-L-theta}
\min_{\Theta\succ0}\;
L(\Theta) = l(\Theta) - h(\Theta),
\end{align}
where both $l$ and $h$ are convex. By Toland duality \cite{toland1978duality}, $h(\Theta)$ admits:
\begin{align}
h(\Theta)
=
\sup_{\Phi}\;
\bigl\{ \langle \Theta, \Phi \rangle - h^*(\Phi) \bigr\}.
\end{align}
Substituting it into \eqref{eq:dc-L-theta} yields:
\begin{align}
L(\Theta)
=
\inf_{\Phi}\;
\bigl\{
l(\Theta) - \langle \Theta, \Phi \rangle + h^*(\Phi)
\bigr\},
\end{align}
which leads to the associated DC dual problem:
\begin{align}
\min_{\Phi}\;
h^*(\Phi) - l^*(\Phi).
\end{align}

At an optimal solution $\Theta^\star$ of \eqref{eq:dc-L-theta}, the DC optimality condition requires $\partial h(\Theta^\star) \subset \partial l(\Theta^\star)$, which reduces to:
\begin{align}
\label{eq:cccp-toland-stationarity}
\nabla l(\Theta^\star) = \nabla h(\Theta^\star),
\end{align}
when $l$ and $h$ are differentiable.

The CCCP method \cite{yuille2003concave, lipp2016variations, yuille2001concave} solves \eqref{eq:cccp-L-theta} iteratively by linearizing the concave part $g(\Theta)$ at $\Theta_t$ to achieve \eqref{eq:cccp-toland-stationarity}, yielding the convex subproblem:
\begin{align}
\Theta_{t+1}
\in
\arg\min_{\Theta\succ0}
\;
l(\Theta) + \langle \nabla g(\Theta_t), \Theta \rangle.
\end{align}
Its optimality condition shows:
\begin{align}
\nabla l(\Theta_{t+1}) = -\nabla g(\Theta_t).
\end{align}
The gradients of $l$ and $g$ are given by:
\begin{align}
\nabla l(\Theta)
&=
-\Bigl(1+\tfrac{\gamma}{n}\Bigr)\Theta^{-1},
\\
\nabla g(\Theta)
&=
\frac{n}{N_r}
\sum_{k=1}^{N_r}
\frac{\bar u_k \bar u_k^\top}
{\bar u_k^\top \Theta \bar u_k}
+
\gamma I.
\end{align}
Therefore:
\begin{align}
\Bigl(1+\tfrac{\gamma}{n}\Bigr)\Theta_{t+1}^{-1}
=
\frac{n}{N_r}
\sum_{k=1}^{N_r}
\frac{\bar u_k \bar u_k^\top}
{\bar u_k^\top \Theta_t \bar u_k}
+
\gamma I,
\end{align}
which leads to the closed-form update:
\begin{align}
\Theta_{t+1}^{-1}
=
\frac{1}{1+\gamma/n}
\left(
\frac{n}{N_r}
\sum_{k=1}^{N_r}
\frac{\bar u_k \bar u_k^\top}
{\bar u_k^\top \Theta_t \bar u_k}
+
\gamma I
\right).
\end{align}
Equivalently, in the $\Sigma$-domain we obtain the CCCP iteration:
\begin{align}
\label{eq:atten-kernel-cccp-iteration}
\Sigma_{t+1}
=
\frac{1}{1+\gamma/n}
\left(
\frac{n}{N_r}
\sum_{k=1}^{N_r}
\frac{\bar u_k \bar u_k^\top}
{\bar u_k^\top \Sigma_t^{-1} \bar u_k}
+
\gamma I
\right).
\end{align}

Note that the objective in \eqref{prob:kernel-mle} is no longer scale-invariant due to the nuclear term. As a result, the optimization may exhibit scale drift, leading to uncontrolled growth or shrinkage of the eigenvalues of $\Sigma$. To improve numerical stability, we normalize $\Sigma_{t+1}$ after each update as:
\begin{align}
\Sigma_{t+1}
\leftarrow
\frac{\Sigma_{t+1}}
{\mathrm{tr}(\Sigma_{t+1})/n}.
\end{align}
The resulting preconditioner is defined as $P = \Sigma_{\star}^{-1/2}$.

\begin{remark}
The CCCP update admits an equivalent Frank--Wolfe (FW) interpretation \cite{frank1956algorithm}. Consider the DC objective $\min_{\Theta \succ 0} L(\Theta)=l(\Theta)-h(\Theta)$ and it can form $\min_{\Theta,\hat l}\; \hat l - h(\Theta)\ \text{s.t.}\ l(\Theta)\le \hat l$. Linearizing $h$ at $\Theta_t$ yields the FW subproblem $\min_{\Theta,\hat l}\; \hat l - \langle \nabla h(\Theta_t), \Theta \rangle\ \text{s.t.}\ l(\Theta)\le \hat l$, whose optimality condition is $\nabla l(\Theta_{t+1})=\nabla h(\Theta_t)$. Substituting the gradients recovers exactly \eqref{eq:atten-kernel-cccp-iteration}, showing that the CCCP iteration is equivalent to applying FW to the DC formulation.
\end{remark}

\subsection{Shrinkage-Regularized CCCP Iteration}
To improve numerical robustness when $N_r < n$, especially under limited samples or severe spectral imbalance \cite{wiesel2011unified}, we introduce a stabilized shrinkage-regularized CCCP update inspired by \cite{chen2011robust}. Firstly, we define:
\begin{align}
\label{eq:kernel_shrinkage_cccp_F}
F_{\gamma}(\Sigma_t)
=
\frac{1}{1+\gamma/n}
\left(
\frac{n}{N_r}
\sum_{k=1}^{N_r}
\frac{\bar u_k \bar u_k^\top}
{\bar u_k^\top \Sigma_t^{-1} \bar u_k + \varepsilon}
+
\gamma I
\right),
\end{align}
where $\varepsilon>0$ is a small safeguard parameter to prevent numerical instability. 

Secondly, we apply isotropic shrinkage regularization:
\begin{align}
\label{eq:kernel_shrinkage_cccp_Sigma}
\widetilde{\Sigma}_{t+1}
=
(1-\rho)\,F_{\gamma}(\Sigma_t)
+
\rho I,
\end{align}
where $\rho \in [0,1]$ controls the strength of shrinkage. In practice, $\rho$ is increased when the smallest eigenvalue becomes too small, and set to zero otherwise. Finally, we normalize:
\begin{align}
\label{eq:kernel_shrinkage_cccp_normalization}
\Sigma_{t+1}
=
\frac{\widetilde{\Sigma}_{t+1}}
{\mathrm{tr}(\widetilde{\Sigma}_{t+1})/n}.
\end{align}

\begin{remark}
The method only requires matrix--vector products of the form $u_k = (\lambda I + G) z_k = \lambda z_k + G z_k$, and does not require forming or storing the full kernel matrix $G$. This makes it suitable for large-scale settings, where $(\lambda I + G)$ can be accessed through efficient operator evaluations.
\end{remark}

\begin{theorem}
Let $\varepsilon > 0$, $\rho \in (0,1]$, and $\gamma \ge 0$. Assume that $\sum_{k=1}^{N_r} \bar u_k \bar u_k^\top \;\succ\; 0$. Define:
\begin{align}
&F_{\gamma,\varepsilon}(\Sigma)
=
\frac{1}{1+\gamma/n}
\left(
\frac{n}{N_r}
\sum_{k=1}^{N_r}
\frac{\bar u_k \bar u_k^\top}
{\bar u_k^\top \Sigma^{-1} \bar u_k + \varepsilon}
+
\gamma I
\right), \\
&\widetilde F_{\gamma,\varepsilon,\rho}(\Sigma)
=
(1-\rho)\,F_{\gamma,\varepsilon}(\Sigma) + \rho I, \\
&\mathcal{T}(\Sigma)
=
\frac{n\,\widetilde F_{\gamma,\varepsilon,\rho}(\Sigma)}
{\mathrm{tr}(\widetilde F_{\gamma,\varepsilon,\rho}(\Sigma))}.
\end{align}
Then there exists at least one matrix $\Sigma_\star \succ 0$ such that:
\begin{align}
\mathcal{T}(\Sigma_\star) = \Sigma_\star,
\qquad
\mathrm{tr}(\Sigma_\star)=n.
\end{align}
\end{theorem}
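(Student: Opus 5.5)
The plan is to apply Brouwer's fixed-point theorem to $\mathcal{T}$ restricted to a compact convex set of trace-normalized positive definite matrices. Define
\begin{align}
\mathcal{K}_\delta = \{\Sigma \in \mathbb{S}^n : \Sigma \succeq \delta I,\ \mathrm{tr}(\Sigma) = n\},
\end{align}
with $\delta \in (0,1]$ to be chosen. This set is convex (an intersection of a PSD-cone translate with an affine hyperplane), closed, and bounded, since $\Sigma \succeq 0$ and $\mathrm{tr}(\Sigma) = n$ give $\|\Sigma\|_2 \le n$. It is nonempty because $I \in \mathcal{K}_\delta$. It therefore suffices to show that $\mathcal{T}$ is continuous on $\mathcal{K}_\delta$ and maps $\mathcal{K}_\delta$ into itself.

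\textbf{Continuity.} On $\mathcal{K}_\delta$ the inverse $\Sigma^{-1}$ exists and depends continuously on $\Sigma$, so each denominator $\bar u_k^\top \Sigma^{-1}\bar u_k + \varepsilon$ is continuous. This denominator is at least $\varepsilon > 0$, so each summand is continuous; this is exactly the role of $\varepsilon$. Hence $F_{\gamma,\varepsilon}$ and $\widetilde F_{\gamma,\varepsilon,\rho}$ are continuous. Moreover, $F_{\gamma,\varepsilon}(\Sigma) \succeq 0$, so $\widetilde F_{\gamma,\varepsilon,\rho}(\Sigma) \succeq \rho I$ and $\mathrm{tr}(\widetilde F_{\gamma,\varepsilon,\rho}(\Sigma)) \ge \rho n > 0$. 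The normalization therefore never divides by zero, and $\mathcal{T}$ is continuous.

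\textbf{Invariance.} We need a uniform upper bound on the trace. Using $\bar u_k^\top \Sigma^{-1}\bar u_k \ge 0$ and $\|\bar u_k\| = 1$,
\begin{align}
\mathrm{tr}(F_{\gamma,\varepsilon}(\Sigma)) \le \frac{1}{1+\gamma/n}\Bigl(\frac{n}{\varepsilon} + \gamma n\Bigr) =: C .
\end{align}
So $\mathrm{tr}(\widetilde F_{\gamma,\varepsilon,\rho}(\Sigma)) \le (1-\rho)C + \rho n =: M$, independently of $\Sigma$. Combining this with $\widetilde F_{\gamma,\varepsilon,\rho}(\Sigma) \succeq \rho I$ gives $\mathcal{T}(\Sigma) \succeq (n\rho/M)\, I$ and $\mathrm{tr}(\mathcal{T}(\Sigma)) = n$. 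Choosing $\delta = \min\{1, n\rho/M\}$ then yields $\mathcal{T}(\mathcal{K}_\delta) \subseteq \mathcal{K}_\delta$. This uniform lower eigenvalue bound is the main point to get right: without it the fixed point could drift to the boundary of the PD cone. The shrinkage term $\rho I$ with $\rho > 0$ and the safeguard $\varepsilon$ together supply it. Notably, the assumption $\sum_k \bar u_k\bar u_k^\top \succ 0$ is not even needed for this argument; it would matter only if $\rho = 0$.

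\textbf{Conclusion.} Brouwer's theorem, applied to the compact convex set $\mathcal{K}_\delta$, which is homeomorphic to a closed ball in the affine space of trace-$n$ symmetric matrices, gives $\Sigma_\star \in \mathcal{K}_\delta$ with $\mathcal{T}(\Sigma_\star) = \Sigma_\star$. Membership in $\mathcal{K}_\delta$ directly gives $\Sigma_\star \succeq \delta I \succ 0$ and $\mathrm{tr}(\Sigma_\star) = n$.
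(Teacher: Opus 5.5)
Your proof takes the same route as the paper: Brouwer's theorem on trace-normalized matrices, with the shrinkage term $\rho I$ supplying positive definiteness. Yours is the rigorous version of it. The paper works on $\mathcal{S}=\{\Sigma\succeq 0,\ \mathrm{tr}(\Sigma)=n\}$, where $\mathcal{T}$ is undefined at singular points, and only asserts that $\mathcal{T}$ ``operates on a compact convex subset.'' Your bound $\mathrm{tr}(\widetilde F_{\gamma,\varepsilon,\rho}(\Sigma))\le M$ gives the uniform floor $\mathcal{T}(\Sigma)\succeq (n\rho/M)\,I$, and with it the explicit invariant set $\mathcal{K}_\delta$ that the paper's argument actually needs.

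You are also right that the assumption $\sum_k \bar u_k\bar u_k^\top\succ 0$ is never used. In fact $\varepsilon$ is not essential either, so your remark that continuity is ``exactly the role of $\varepsilon$'' overstates it. On trace-$n$ matrices one has $\bar u_k^\top\Sigma^{-1}\bar u_k\ge 1/n$, which already gives both continuity and a trace bound.
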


\begin{proof}
Consider the set $\mathcal{S}=\left\{\Sigma \succeq 0 \;\middle|\;\mathrm{tr}(\Sigma)=n\right\}$, which is convex and closed. For any $\Sigma \succ 0$, all denominators are strictly positive, so $F_{\gamma,\varepsilon}(\Sigma)$ is well-defined. Moreover, since $\rho \in (0,1]$:
\begin{align}
\widetilde F_{\gamma,\varepsilon,\rho}(\Sigma)
=
(1-\rho)F_{\gamma,\varepsilon}(\Sigma) + \rho I
\succeq \rho I,
\end{align}
which guarantees $\widetilde F_{\gamma,\varepsilon,\rho}(\Sigma) \succ 0$.

After normalization, we have:
\begin{align}
\mathcal{T}(\Sigma) \succ 0,
\qquad
\mathrm{tr}(\mathcal{T}(\Sigma))=n,
\end{align}
so $\mathcal{T}$ maps $\mathcal{S}$ into itself.

The mapping $\mathcal{T}$ is continuous, and its outputs remain bounded due to the uniform lower bound $\rho I$. Therefore, $\mathcal{T}$ operates on a compact convex subset of $\mathcal{S}$. By Brouwer's fixed-point theorem \cite{granas2003fixed}, there exists at least one $\Sigma_\star \in \mathcal{S}$ such that $\mathcal{T}(\Sigma_\star) = \Sigma_\star$.
\end{proof}

\subsection{Preconditioned Conjugate Gradient}
After obtaining the covariance estimate $\Sigma$ via the shrinkage-regularized CCCP, the preconditioner is constructed as $P = \Sigma_{\star}^{-1/2}$. We aim to solve the linear system:
\begin{align}
(\lambda I + G)\boldsymbol{\alpha} = \mathbf{y},
\end{align}
where $\boldsymbol{\alpha} \in \mathbb{R}^n$ denotes the coefficient vector and $\mathbf{y} \in \mathbb{R}^n$ is the observation vector.

To improve numerical condition number, we consider the left-preconditioned system:
\begin{align}
P(\lambda I + G)\boldsymbol{\alpha} = P\mathbf{y},
\end{align}
where the preconditioner $P$ is designed to reduce the condition number from $\kappa(\lambda I + G)$ to $\kappa(P(\lambda I + G))$.

We adopt the PCG method to iteratively compute the solution. Starting from an initial estimate:
\begin{align}
\boldsymbol{\alpha}^0 = \mathbf{0}, \quad 
\mathbf{r}^0 = \mathbf{y} - (\lambda I + G)\boldsymbol{\alpha}^0, \quad
\boldsymbol{\vartheta}^0 = P \mathbf{r}^0, \quad
\mathbf{p}^0 = \boldsymbol{\vartheta}^0,
\end{align}
the residual and preconditioned residual at iteration $k$ are given by:
\begin{align}
\mathbf{r}^k = \mathbf{y} - (\lambda I + G)\boldsymbol{\alpha}^k, 
\qquad
\boldsymbol{\vartheta}^k = P \mathbf{r}^k.
\end{align}
The step size is computed as:
\begin{align}
\delta^k = \frac{(\mathbf{r}^k)^\top \boldsymbol{\vartheta}^k}{(\mathbf{p}^k)^\top (\lambda I + G)\mathbf{p}^k},
\end{align}
and the solution is updated via:
\begin{align}
\boldsymbol{\alpha}^{k+1} = \boldsymbol{\alpha}^k + \delta^k \mathbf{p}^k.
\end{align}
The residual and the preconditioned residual are updated as:
\begin{align}
\mathbf{r}^{k+1} = \mathbf{r}^k - \delta^k (\lambda I + G)\mathbf{p}^k, \quad
\boldsymbol{\vartheta}^{k+1} = P \mathbf{r}^{k+1}.
\end{align}
The conjugate direction is updated as:
\begin{align}
\beta^k = \frac{(\mathbf{r}^{k+1})^\top \boldsymbol{\vartheta}^{k+1}}{(\mathbf{r}^k)^\top \boldsymbol{\vartheta}^k}, 
\qquad
\mathbf{p}^{k+1} = \boldsymbol{\vartheta}^{k+1} + \beta^k \mathbf{p}^k.
\end{align}

The iteration terminates when the relative residual satisfies:
\begin{align}
{\|\mathbf{r}^k\|_2}/{\|\mathbf{y}\|_2} \le \varepsilon_{\text{tol}}.
\end{align}
or when a target objective accuracy is reached.


\vspace{1mm}
\begin{example}
Consider $n=3$ measurement locations extracted from an NVIDIA Sionna RT simulation of the Munich urban scene at $f_c = 3.5$~GHz \cite{chander2024sionna}. The locations $\mathbf{x}_1, \mathbf{x}_2, \mathbf{x}_3$ yield two-dimensional feature embeddings $\mathbf{e}_i \in \mathbb{R}^2$ and observed received signal strength values:
\begin{align}
\mathbf{e}_1 &= (0.241,\; 0.444), \quad
\mathbf{e}_2 = (-0.336,\; 0.112), \notag \\
\mathbf{e}_3 &= (-0.220,\; 0.353), \notag \\
\mathbf{y} \; &= [-66.14,\; -65.77,\; -77.30]^\top\ \text{dBm}.
\end{align}
The exponential attention kernel $G_{ij} = \exp(\langle \mathbf{e}_i, \mathbf{e}_j \rangle)$ gives:
\begin{align}
G =
\begin{bmatrix}
1.291 & 0.969 & 1.109 \\
0.969 & 1.133 & 1.120 \\
1.109 & 1.120 & 1.189
\end{bmatrix},
\end{align}
and with $\lambda = 0.1$:
\begin{align}
\lambda I + G =
\begin{bmatrix}
1.391 & 0.969 & 1.109 \\
0.969 & 1.233 & 1.120 \\
1.109 & 1.120 & 1.289
\end{bmatrix}.
\end{align}
Solving $(\lambda I + G)\boldsymbol{\alpha} = \mathbf{y}$ yields:
\begin{align}
\boldsymbol{\alpha} = [0.815,\; 5.438,\; -65.406]^\top,
\end{align}
which can be verified by direct substitution. For a query location $\mathbf{x}^\star$ with embedding $\mathbf{e}^\star = (0.051,\; 0.452)$, the kernel values are:
\begin{align}
G(\mathbf{x}^\star, \mathbf{x}_i) = [1.237,\; 1.034,\; 1.160],
\end{align}
and the predicted received signal strength at $\mathbf{x}^\star$ is:
\begin{align}
\hat{r}(\mathbf{x}^\star)
= \sum_{i=1}^{3} G(\mathbf{x}^\star, \mathbf{x}_i)\,\alpha_i
\approx -69.2\ \text{dBm},
\end{align}
close to the Sionna ground truth of $-67.3$~dBm. This example illustrates the end-to-end computation of \eqref{eq:sc-radio-map-reconstruction}: solving $(\lambda I + G)\boldsymbol{\alpha} = \mathbf{y}$ recovers the kernel coefficients $\boldsymbol{\alpha}$, which are then used to predict the radio field at arbitrary locations via kernel expansion. Large-scale evaluations of this procedure are presented in Section~\ref{sec:simu}.
\end{example}

\section{Numerical Example}
\label{sec:simu}

\subsection{Evaluation Setup}
\label{subsec:sim-setup}

\subsubsection{Problem Setup}
We consider a two-dimensional spatial region $\Omega = [0,100]\times[0,100]$~m$^2$, i.e., $d_x=2$. The ground-truth radio field $r(\mathbf{x})$ is generated by superposing multiple transmitters with spatially decaying power profiles and log-normal shadowing. For each realization, $n \in \{50, 200, 500, 1000, 2000\}$ measurement locations $\{\mathbf{x}_i\}_{i=1}^n$ are drawn uniformly from $\Omega$, and noisy observations are collected as:
\begin{align}
y_i = r(\mathbf{x}_i) + \varepsilon_i,
\quad \varepsilon_i \sim \mathcal{N}(0, \sigma_\varepsilon^2),
\quad \sigma_\varepsilon = 1.5, 
\end{align}
where the noise is modeled in the logarithmic (dBm) domain. Each location $\mathbf{x}_i \in \mathbb{R}^{d_x}$ is mapped to a $d_e$-dimensional feature embedding $\mathbf{e}_i \in \mathbb{R}^{d_e}$ with $d_e=10$ via a deterministic position-driven mapping, and the attention kernel is formed as $G_{ij} = \exp(\langle\mathbf{e}_i, \mathbf{e}_j\rangle)$ per \eqref{eq:sc-exp-kernel}. The reconstruction is formulated as the regularized attention kernel regression problem \eqref{prob:sc-attn-kernel-regression} with $\lambda = 10^{-2}$:
\begin{align}
\label{eq:simu-R-definition}
\min_{\boldsymbol{\alpha} \in \mathbb{R}^n} \; R(\boldsymbol{\alpha}) := \|G\boldsymbol{\alpha}-\mathbf{y}\|_2^2 + \lambda \boldsymbol{\alpha}^\top G \boldsymbol{\alpha},
\end{align}
and the radio field at any location is reconstructed via kernel expansion \eqref{eq:sc-radio-map-reconstruction}. Reconstruction performance is evaluated on a dense $45\times45$ grid over $\Omega$.

\begin{table}[t]
\centering
\caption{Key simulation parameters.}
\label{tab:simulation_parameters}
\renewcommand{\arraystretch}{1.0}

\resizebox{\columnwidth}{!}{
\begin{tabular}{l c l}
\toprule
\textbf{Parameter} & \textbf{Value} & \textbf{Description} \\
\midrule

$n$ & $\{50,200,500,1000,2000\}$ & Number of measurements \\
$\Omega$ & $[0,100]\times[0,100]$ m$^2$ & Spatial region \\
$d_x$ & $2$ & Input spatial dimension \\
$\sigma$ & $1.5$ dB & Measurement noise std \\
$d_e$ & $10$ & Embedding dimension \\
$\lambda$ & $10^{-2}$ & Regularization parameter \\
$G$ & $\exp(EE^\top)$ & Attention kernel \\
Grid size & $45\times45$ & Evaluation grid \\

$N_r$ & Adaptive & Number of random directions \\
$\gamma$ & $10^{-1}$ & CCCP regularization \\
$\rho$ & Adaptive & Shrinkage parameter \\

$\eta$ (GD) & Tuned & Gradient descent step size \\
$P_{\text{J}}$ (Jacobi) & $\mathrm{diag}(\lambda I + G)^{-1}$ & Jacobi preconditioner \\
Kernel (GPRT) & Rational quadratic  & Gaussian process kernel type \\
$\sigma^2$ (GPRT) & Tuned & Gaussian process noise variance \\

PCG.TOL & $10^{-10}$--$10^{-11}$ & Residual tolerance \\
TAR.TOL & $10^{-3}$ & Target objective gap \\

\bottomrule
\end{tabular}
}
\end{table}

\vspace{1mm}
\subsubsection{Preconditioner Construction}
As shown in Algorithm LAKER, the preconditioner is learned from random directions as defined in \eqref{eq:random_directions}. The number of random vectors $N_r$ follows a hybrid schedule that scales as $\mathcal{O}(\sqrt{n})$ for small $n$ and linearly for large $n$, balancing estimation quality and computational cost. To improve robustness under undersampling (i.e., when $N_r < n$), an adaptive shrinkage parameter $\rho$ is employed within the CCCP iteration \eqref{eq:kernel_shrinkage_cccp_Sigma}: when $N_r \ge n$, a small fixed regularization $\rho = \epsilon_\rho$ suffices for numerical stability; otherwise, $\rho$ increases with the undersampling ratio $N_r/n$ and is further modulated by $\gamma$. The final preconditioner is given by $P = \Sigma_{\star}^{-1/2}$.

\vspace{1mm}
\subsubsection{Baselines}
We consider baselines at both the numerical layer and the reconstruction layer, with the convex solver CVXPY \cite{diamond2016cvxpy} providing high-accuracy reference solutions.

\vspace{1mm}
\textit{Numerical Layer.}
We consider two representative methods for the linear system $(\lambda I + G)\boldsymbol{\alpha} = \mathbf{y}$:
\begin{itemize}
    \item Gradient Descent (GD) \cite{bottou2018optimization}: a first-order method that iteratively updates $\boldsymbol{\alpha}$ via $\boldsymbol{\alpha}^{(k+1)} = \boldsymbol{\alpha}^{(k)} - \eta \nabla R(\boldsymbol{\alpha}^{(k)})$, where $R(\boldsymbol{\alpha})$ is defined in \eqref{eq:simu-R-definition} and $\eta > 0$ is the step size selected by grid search.

    \item Jacobi Preconditioned Conjugate Gradient
    (Jacobi PCG) \cite{saad2003iterative, cutajar2016preconditioning}: a Krylov subspace method equipped with the diagonal preconditioner $P_{\mathrm{J}} = \mathrm{diag}(\lambda I + G)^{-1}$, which captures only the local diagonal structure of the system.
\end{itemize}

\vspace{1mm}
\textit{Reconstruction Layer.}
We compare against a Gaussian process regression-based radio map reconstruction method (GPRT) \cite{chen2025gprt}, which models spatial correlations via kernel-based probabilistic inference. The prediction at a query location $\mathbf{x}$ takes the form:
\begin{align}
\hat{r}(\mathbf{x}) = k(\mathbf{x}, X)\,(K + \sigma^2 I)^{-1}\mathbf{y},
\end{align}
where $X = [\mathbf{x}_1, \dots, \mathbf{x}_n]^\top$ collects the $n$ training locations, $K \in \mathbb{R}^{n \times n}$ is the kernel matrix with entries $K_{ij} = k(\mathbf{x}_i, \mathbf{x}_j)$, and $k(\mathbf{x}, X) \in \mathbb{R}^{1 \times n}$ denotes the cross-kernel vector at the query point. This shares the same kernel regression structure as \eqref{eq:sc-radio-map-reconstruction}, with $K$ and $k(\mathbf{x}, X)$ playing the roles of $G$ and $G(\mathbf{x}, X)$, respectively. In GPRT, the kernel is a hand-crafted composite incorporating both spatial proximity and terrain (elevation) effects \cite{chen2025gprt}, implemented via an anisotropic Rational Quadratic (RQ) kernel over spatial and elevation dimensions. The two methods thus differ primarily in kernel design: GPRT employs a fixed composite kernel, whereas the algorithm LAKER uses a learned attention kernel.

\vspace{1mm}
\subsubsection{Evaluation Metrics}
Performance is evaluated at both the numerical layer and the reconstruction layer.

\vspace{1mm}
\textit{Numerical Layer.}
We assess solution accuracy and computational efficiency via the following metrics:
\begin{itemize}
    \item Condition number: $\kappa(\lambda I + G)$ for the unpreconditioned system, $\kappa(P(\lambda I + G))$ for the algorithm LAKER, and $\kappa(P_{\mathrm{J}}(\lambda I + G))$ for Jacobi PCG.

    \item Iterations: the number of iterations required to reach a target objective gap of $\text{TAR.TOL} = 10^{-3}$, i.e.,
    \begin{align}
    \label{eq:iterations}
    \frac{|R(\boldsymbol{\alpha}) - R(\boldsymbol{\alpha}_{\mathrm{CVX}})|}{|R(\boldsymbol{\alpha}_{\mathrm{CVX}})|} \le 10^{-3}.
    \end{align}

    \item Residual (Res.): the normalized linear system residual:
    \begin{align}
    \label{eq:residual}
    \text{Res.} = \frac{\|(\lambda I + G)\boldsymbol{\alpha} - \mathbf{y}\|_2}{\|\mathbf{y}\|_2}.
    \end{align}
    
    \item Objective gap (Obj. Gap): the relative gap between the achieved objective value and the convex solver reference:
    \begin{align}
    \label{eq:objective-gap}
    \text{Obj. Gap} = \frac{|R(\boldsymbol{\alpha}) - R(\boldsymbol{\alpha}_{\mathrm{CVX}})|}{|R(\boldsymbol{\alpha}_{\mathrm{CVX}})|}.
    \end{align}  
\end{itemize}

\vspace{1mm}
\textit{Reconstruction Layer.}
Let $\{\mathbf{x}_j\}_{j=1}^M$ denote a dense $45\times45$ evaluation grid over $\Omega$, distinct from the measurement locations $\{\mathbf{x}_i\}_{i=1}^n$. We evaluate reconstruction quality via:
\begin{itemize}
    \item Prediction discrepancy (Pred. Disc.): the relative difference in predicted outputs with respect to the convex solver reference:
    \begin{align}
    \label{eq:prediction-discrepancy}
    \text{Pred. Disc.}
    = \frac{\|G\boldsymbol{\alpha} -
    G\boldsymbol{\alpha}_{\mathrm{CVX}}\|_2}
    {\|G\boldsymbol{\alpha}_{\mathrm{CVX}}\|_2}.
    \end{align}

    \item Root Mean Square Error (RMSE), and Normalized Mean Square Error (NMSE): standard metrics for reconstruction accuracy against the ground-truth radio field:
    \begin{align}
    \text{RMSE} &= \sqrt{\frac{1}{M}\sum_{j=1}^{M}
    \bigl(\hat{r}(\mathbf{x}_j) - r(\mathbf{x}_j)\bigr)^2}, \label{eq:rmse} \\
    \text{NMSE} &= \frac{\sum_{j=1}^{M}
    \bigl(\hat{r}(\mathbf{x}_j) - r(\mathbf{x}_j)\bigr)^2}
    {\sum_{j=1}^{M} r(\mathbf{x}_j)^2}. \label{eq:nmse}
    \end{align}
\end{itemize}

\begin{figure*}[!t]
\centering
\subfloat[\footnotesize Condition number.\label{fig:condition_number_comparison_vs_n}]{
\includegraphics[width=0.24\textwidth]{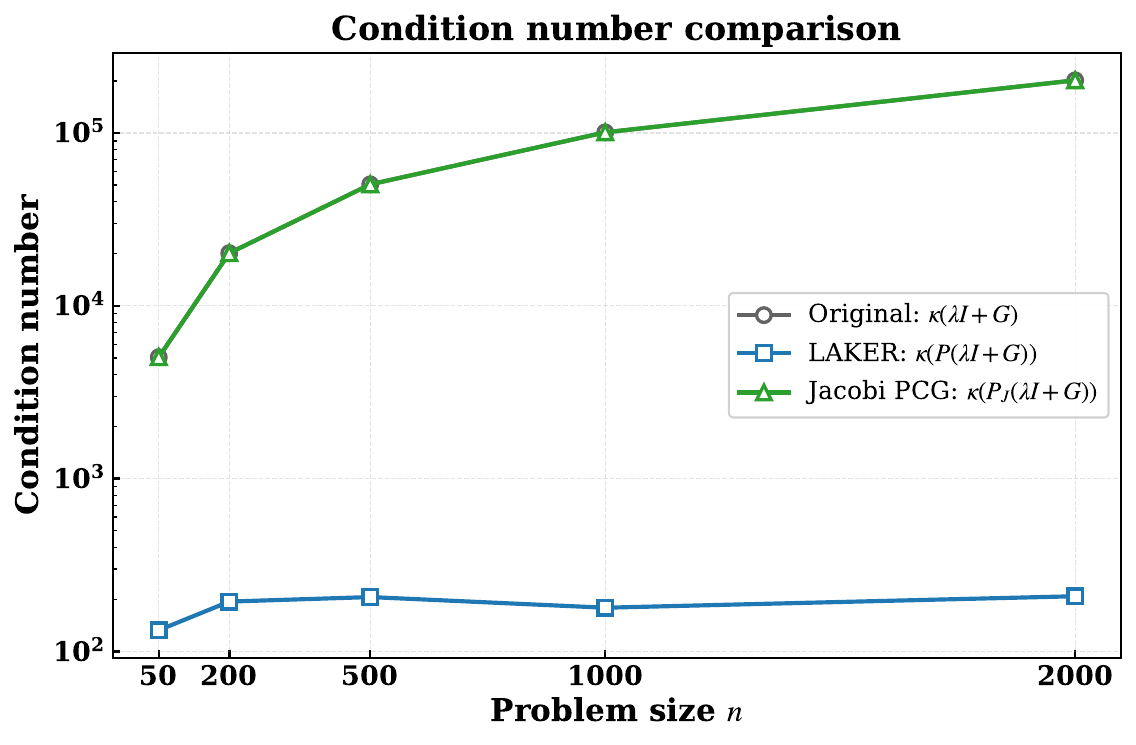}
}
\subfloat[\footnotesize Iterations.\label{fig:iterations_to_accuracy_vs_n}]{
\includegraphics[width=0.24\textwidth]{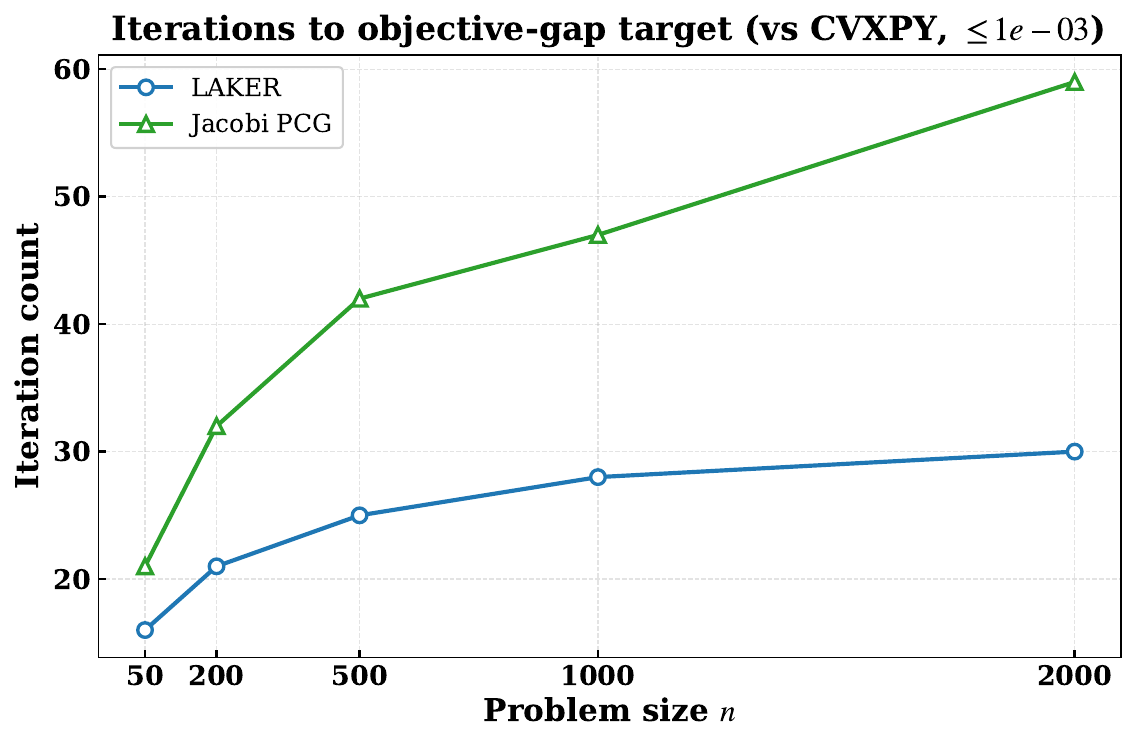}
}
\subfloat[\footnotesize Residual.\label{fig:linear_system_residual_vs_n}]{\includegraphics[width=0.24\textwidth]{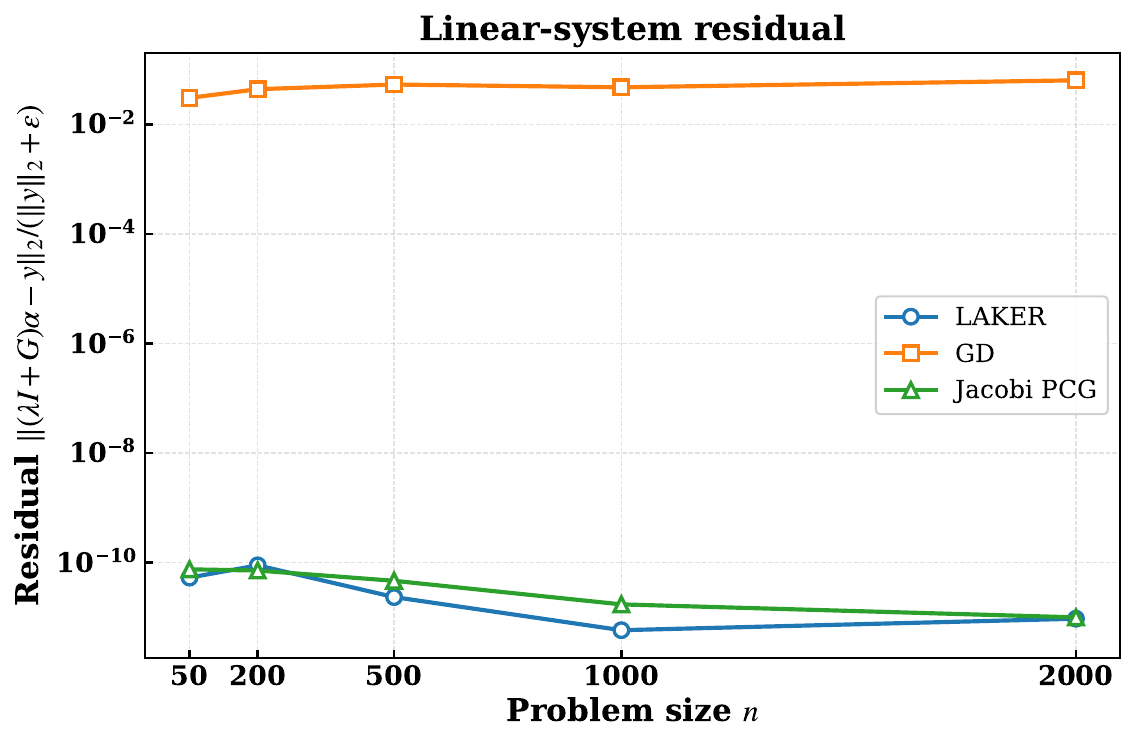}
}
\subfloat[\footnotesize Objective value gap.\label{fig:objective_gap_vs_n}]{\includegraphics[width=0.24\textwidth]{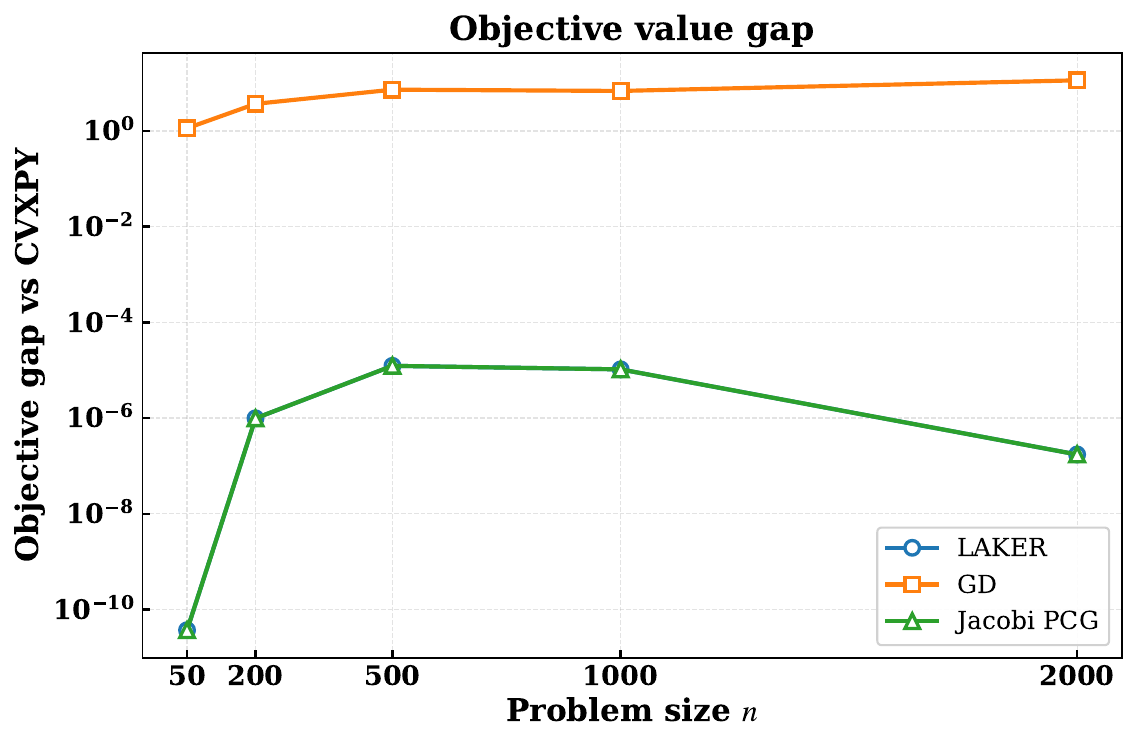}
}
\caption{Comparison between the algorithm LAKER, first-order method (GD), preconditioner method (Jocabi PCG) across different problem sizes, including condition number, iterations to target objective gap (cf. \eqref{eq:iterations}), and final solution accuracy in terms of residual (cf. \eqref{eq:residual}) and objective value gap (cf. \eqref{eq:objective-gap}).}
\label{fig:ours_vs_cvxpy_numerical_results}
\end{figure*}

\subsection{Evaluation Results}
This section reports evaluation results from two aspects. On the numerical side, we analyze condition number, convergence, and runtime efficiency, showing that the algorithm LAKER significantly improves spectral condition number and accelerates iterative solving. On the reconstruction side, we evaluate radio map reconstruction performance, showing that the algorithm LAKER achieves near-identical accuracy to the convex solver reference and slightly better large-scale performance than the Gaussian process regression-based method, while being significantly more efficient.

\vspace{1mm}
\subsubsection{Condition Number}
Figure ~\ref{fig:condition_number_comparison_vs_n} compares the condition numbers of the original system $\lambda I + G$ and the preconditioned systems across problem sizes. As $n$ increases from $50$ to $2000$, $\kappa(\lambda I + G)$ grows from $5.05\times10^3$ to $2.02\times10^5$, a roughly $40\times$ increase, confirming the severe spectral ill-conditioning of the exponential attention kernel. The Jacobi preconditioner provides negligible improvement: $\kappa(P_{\mathrm{J}}(\lambda I + G))$ remains essentially identical to the original system at all scales, since the diagonal Jacobi structure fails to capture the dominant off-diagonal interactions of the attention kernel. In contrast, the algorithm LAKER significantly reduces the condition number and keeps it nearly constant across all scales. Specifically, $\kappa(P(\lambda I + G))$ stays within the narrow range $[1.33\times10^2,\, 2.09\times10^2]$ for $n \in \{50,\dots,2000\}$. At $n=2000$, the algorithm LAKER reduces the condition number from $2.02\times10^5$ to $2.09\times10^2$, an improvement of nearly three orders of magnitude. This confirms that the learned preconditioner in the algorithm LAKER effectively reshapes the spectrum of the attention kernel system, which directly explains the faster convergence reported in the following experiments.

\vspace{1mm}
\subsubsection{Iterations}
Figure \ref{fig:iterations_to_accuracy_vs_n} evaluates the number of iterations required to reach a target objective gap of $10^{-3}$ with respect to the convex solver reference solution. The algorithm LAKER consistently requires significantly fewer iterations than Jacobi-preconditioned PCG across all problem sizes. Specifically, the algorithm LAKER converges in $16$, $21$, $25$, $28$, and $30$ iterations for $n=50, 200, 500, 1000,$ and $2000$, respectively, whereas Jacobi PCG requires $21$, $32$, $42$, $47$, and $59$ iterations. This corresponds to a reduction of approximately $20\%-50\%$ in iteration count. Moreover, the algorithm LAKER exhibits much slower growth in iteration complexity as the problem size increases, indicating improved efficiency. In contrast, the iteration count of Jacobi PCG grows more rapidly with $n$, reflecting its inability to effectively mitigate the severe ill-conditioning of the system. These results demonstrate that the learned preconditioner in the algorithm LAKER substantially improves the spectral properties of the system, leading to faster convergence to high-accuracy solutions.

\begin{figure}
    \centering
    \includegraphics[width=0.49\textwidth]{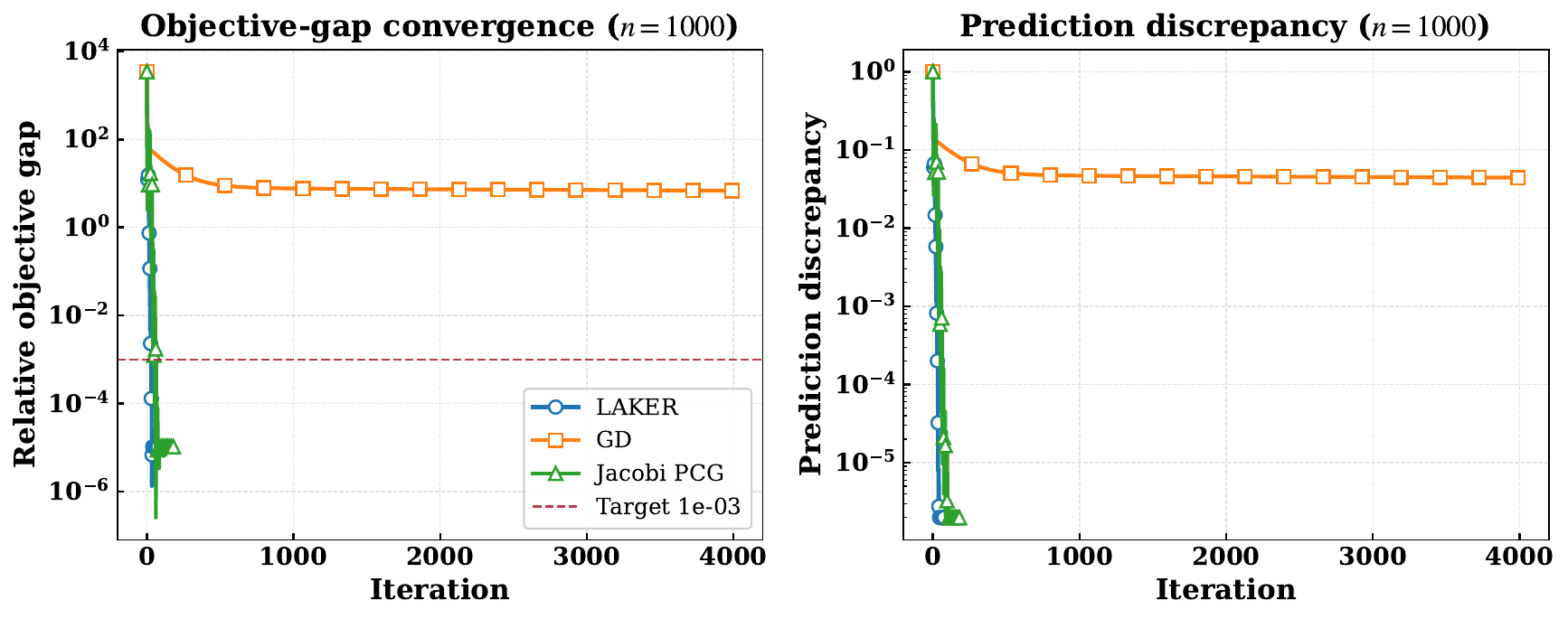}
    \caption{Convergence behavior for $n=1000$ including objective gap (cf. \eqref{eq:objective-gap}) and prediction discrepancy (cf. \eqref{eq:prediction-discrepancy}). The learned preconditioner in the algorithm LAKER leads to significantly faster time-to-accuracy and stable, scale-invariant convergence of the iterative solver.}
    \label{fig:representative_convergence_n1000}
\end{figure}

\vspace{1mm}
\subsubsection{Solution Accuracy}
Figures \ref{fig:linear_system_residual_vs_n} and \ref{fig:objective_gap_vs_n} compare the final linear-system residual and objective value gap achieved by different methods. The algorithm LAKER and Jacobi PCG both attain high-accuracy solutions across all problem sizes. The first-order method GD, by contrast, fails to converge under the iteration budget and yields residuals consistently near $10^{-2}$ and objective gaps exceeding $1$ at all problem sizes. From Figure ~\ref{fig:linear_system_residual_vs_n}, the relative residuals of the algorithm LAKER and Jacobi PCG remain at the level of $10^{-11}$--$10^{-12}$ for all tested scales. For example, at $n=1000$, the residuals are $5.81\times10^{-12}$ for the algorithm LAKER and $1.72\times10^{-11}$ for Jacobi PCG, while GD remains at $4.74\times10^{-2}$. A similar trend is observed at $n=2000$, where the algorithm LAKER and Jacobi PCG achieve residuals of $9.41\times10^{-12}$ and $9.97\times10^{-12}$, respectively, compared to $6.36\times10^{-2}$ for GD. Figure ~\ref{fig:objective_gap_vs_n} further shows that the algorithm LAKER and Jacobi PCG both maintain very small objective gaps relative to the convex solver, while GD remains far from the reference solution. Specifically, the algorithm LAKER achieves an objective gap of $9.90\times10^{-7}$ at $n=200$, $1.04\times10^{-5}$ at $n=1000$, and $1.74\times10^{-7}$ at $n=2000$, with Jacobi PCG giving nearly identical values. In contrast, GD yields objective gaps larger than $1$, reaching $6.83$ at $n=1000$ and $1.14\times10^{1}$ at $n=2000$. These results indicate that both PCG-based methods can recover solutions highly consistent with the convex solver reference, whereas first-order GD is inadequate for the severely ill-conditioned attention kernel systems considered here.

\begin{figure*}[t]
\centering
\includegraphics[width=0.96\textwidth]{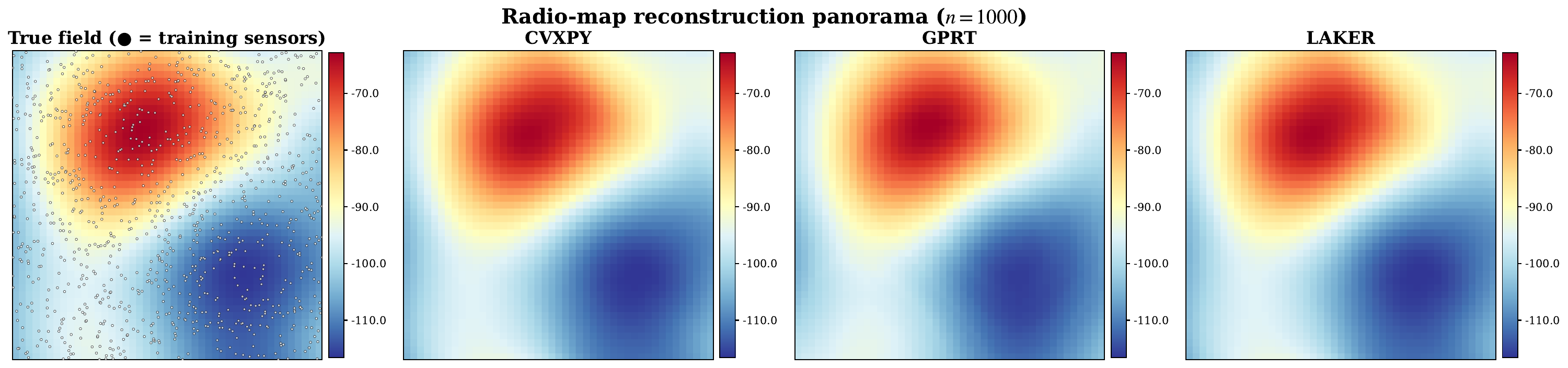}
\caption{Radio map reconstruction panorama for $n=1000$. From left to right: ground-truth field (with training sensor locations), the convex solver reference (CVXPY), the Gaussian process regression-based radio map reconstruction method (GPRT), and the algorithm LAKER. Algorithm LAKER produces spatial estimates that are visually indistinguishable from the the convex solver reference, accurately capturing both the dominant high-power region and the global propagation pattern. In contrast, the GPRT baseline exhibits noticeable deviations in both peak intensity and spatial smoothness.}
\label{fig:radio_map_row1_n1000}
\end{figure*}

\vspace{1mm}
\subsubsection{Convergence Behavior}
Figure ~\ref{fig:representative_convergence_n1000} illustrates the per-iteration convergence of the objective gap (left) and prediction discrepancy (right) at $n=1000$. The results clearly demonstrate that the algorithm LAKER achieves both the fastest convergence and the highest final accuracy among all methods.

Specifically, the objective gap of algorithm LAKER decreases from above $10^{0}$ to below the target threshold of $10^{-3}$ within $28$ iterations, and further converges to $1.04\times10^{-5}$ upon termination at $76$ iterations. The prediction discrepancy follows a similar trajectory, dropping from $\mathcal{O}(1)$ to below $10^{-5}$ within approximately $40$ iterations and remaining stable thereafter. Jacobi PCG also converges to a high-accuracy solution, but at a slower rate: it requires $47$ iterations to cross the target threshold and exhibits a more gradual decay in both metrics. This slower convergence is consistent with the condition number results, where the Jacobi preconditioner leaves $\kappa(P_{\mathrm{J}}(\lambda I + G))$ essentially unchanged from the original system. In contrast, the first-order method GD fails to converge meaningfully: the objective gap quickly stagnates near $6.83$ and the prediction discrepancy plateaus at approximately $4\times10^{-2}$, confirming that first-order methods are fundamentally inadequate for the severely ill-conditioned attention kernel system. The rapid and monotone convergence of algorithm LAKER, by contrast, is a direct consequence of the learned preconditioner compressing the spectrum of $(\lambda I + G)$ and enabling more uniform error reduction across spectral modes. Combined with the condition number results, these observations confirm that the algorithm LAKER significantly improves convergence efficiency while maintaining numerical stability across all tested scales.

\vspace{1mm}
\subsubsection{Spatial Reconstruction}
Figure ~\ref{fig:radio_map_row1_n1000} presents a qualitative comparison of radio map reconstruction at $n=1000$, showing the ground-truth field alongside the outputs of the convex solver, the Gaussian process regression-based radio map reconstruction method GPRT, and the algorithm LAKER. Both the convex solver and the algorithm LAKER faithfully recover the dominant spatial structure, including the high-power region in the upper left and the low-power region in the lower right, with consistent spatial structure throughout the domain. Visually, the two reconstructions are nearly indistinguishable, a finding supported by their RMSE values of $0.5240$ (LAKER) and $0.5242$ (the convex solver), differing by less than $2\times10^{-4}$. GPRT captures the overall spatial trend but exhibits noticeable discrepancies in peak intensity and produces an overly smooth field that misses finer spatial variations. This over-smoothing is a consequence of the fixed composite kernel in GPRT, whose built-in spatial prior imposes stronger regularity than the attention kernel. Quantitatively, GPRT achieves an RMSE of $0.6921$ at $n=1000$, approximately $32\%$ higher than LAKER. These results demonstrate that the algorithm LAKER matches the spatial fidelity of the convex solver reference solution while outperforming the model-driven GPRT baseline at this scale. We note that GPRT achieves lower RMSE at smaller problem sizes such as $n \le 500$, consistent with the well-known advantage of Gaussian process regression under limited observations; the advantage of the attention kernel formulation becomes apparent as $n$ grows.

\begin{figure}[t]
\centering
\includegraphics[width=0.48\textwidth]{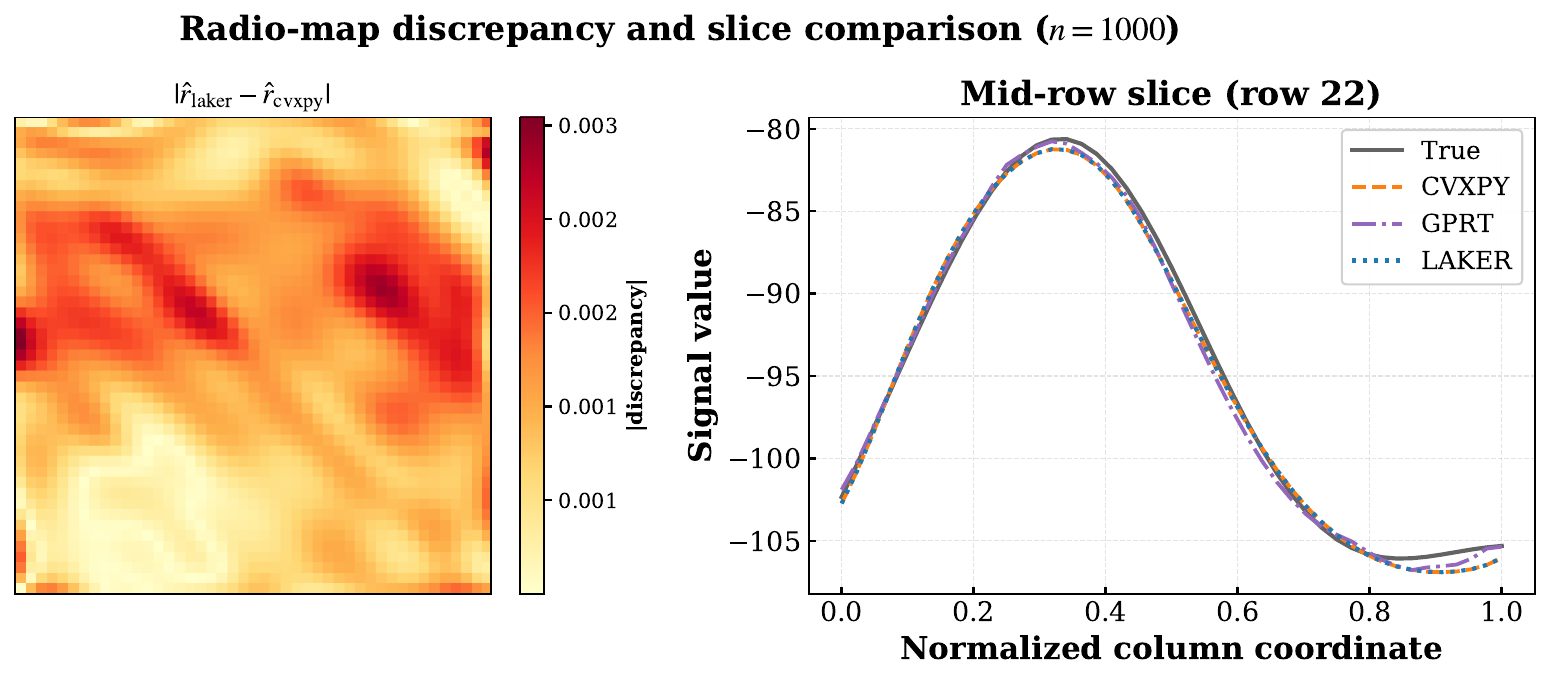}
\caption{Discrepancy and slice comparison for $n=1000$. Left: absolute difference between the algorithm LAKER and the convex solver reference solution. Right: mid-row slice comparing the ground truth, the convex solver, the Gaussian process regression-based method (GPRT), and the algorithm LAKER. The discrepancy remains very small across the domain, and LAKER closely aligns with both the convex solver and the ground truth, while GPRT exhibits visible deviations near peak and valley regions.}
\label{fig:radio_map_discrepancy_slice_n1000}
\end{figure}

\vspace{1mm}
\subsubsection{Discrepancy Analysis}
Figure ~\ref{fig:radio_map_discrepancy_slice_n1000} examines the reconstruction discrepancy at $n=1000$ from both a global (spatial map) and a local (cross-section) perspective. Specifically, the discrepancy map (left) shows the pointwise absolute difference $|\hat{r}_{\text{laker}} - \hat{r}_{\text{cvx}}|$ between the algorithm LAKER and the convex solver across the domain. The values remain uniformly below $3\times10^{-3}$ throughout, with no systematic bias, confirming that the algorithm LAKER closely reproduces the convex solver reference at a global level. The spatially distributed pattern of the discrepancy reflects local variations in the attention kernel regression solution rather than solver error, consistent with the near-zero residuals. The mid-row cross-section (right) provides a finer comparison of local reconstruction behavior. The curves of the ground truth, the convex solver, and the algorithm LAKER are nearly perfectly aligned across the entire cross-section, including both the peak region and the low-signal tail. GPRT, by contrast, underestimates the signal intensity near the peak and produces a smoother curve that deviates noticeably in the right portion of the domain (normalized coordinate $x > 0.7$). These observations confirm that LAKER faithfully captures both large-scale spatial trends and fine-grained local variations of the radio field, while GPRT's fixed kernel prior leads to over-smoothing at this problem scale.

\begin{table*}[!htbp]
\centering
\caption{Numerical performance across problem sizes ($\lambda=10^{-2}$, $\gamma=10^{-1}$). Bold indicates the best result in each column. The first-order method GD does not reach the target accuracy within the iteration budget.}
\label{tab:opt_results}
\small
\setlength{\tabcolsep}{4pt}
\renewcommand{\arraystretch}{1.2}
\begin{adjustbox}{max width=\textwidth}
\begin{tabular}{c ccc cccc cc cc}
\toprule
& \multicolumn{3}{c}{Numerical Accuracy}
& \multicolumn{4}{c}{Solver Time (seconds)}
& \multicolumn{2}{c}{Condition Number}
& \multicolumn{2}{c}{PCG Iterations} \\
\cmidrule(lr){2-4}
\cmidrule(lr){5-8}
\cmidrule(lr){9-10}
\cmidrule(lr){11-12}
$n$
& Obj.\ Gap
& Residual
& Pred.\ Disc.
& Convex Solver
& LAKER
& Jacobi
& GD
& $\kappa(\lambda I + G)$
& $\kappa(P(\lambda I + G))$
& LAKER
& Jacobi \\
\midrule
50
& \textbf{3.71e{-11}}
& \textbf{5.30e{-11}}
& \textbf{3.42e{-09}}
& 0.062
& \textbf{0.009}
& 0.011
& 0.204
& 5.05e{+03}
& \textbf{1.33e{+02}}
& \textbf{16}
& 21 \\

200
& \textbf{9.90e{-07}}
& \textbf{8.88e{-11}}
& \textbf{1.38e{-07}}
& 0.078
& \textbf{0.043}
& 0.071
& 3.324
& 2.02e{+04}
& \textbf{1.95e{+02}}
& \textbf{21}
& 32 \\

500
& \textbf{1.24e{-05}}
& \textbf{2.34e{-11}}
& \textbf{2.06e{-06}}
& 0.463
& \textbf{0.162}
& 0.198
& 7.499
& 5.04e{+04}
& \textbf{2.07e{+02}}
& \textbf{25}
& 42 \\

1000
& \textbf{1.04e{-05}}
& \textbf{5.81e{-12}}
& \textbf{1.98e{-06}}
& 2.875
& \textbf{0.411}
& 1.010
& 25.394
& 1.01e{+05}
& \textbf{1.79e{+02}}
& \textbf{28}
& 47 \\

2000
& \textbf{1.74e{-07}}
& \textbf{9.41e{-12}}
& \textbf{4.13e{-08}}
& 37.678
& \textbf{1.699}
& 19.348
& 102.657
& 2.02e{+05}
& \textbf{2.09e{+02}}
& \textbf{30}
& 59 \\
\bottomrule
\end{tabular}
\end{adjustbox}
\end{table*}

\begin{table}[!htbp]
\centering
\caption{Radio map reconstruction performance. Bold indicates the best result in each metric column.}
\label{tab:recon_results}
\small
\renewcommand{\arraystretch}{1.2}
\setlength{\tabcolsep}{3pt}
\begin{tabular*}{0.95\columnwidth}{@{\extracolsep{\fill}}c cc cc}
\toprule
& \multicolumn{2}{c}{RMSE}
& \multicolumn{2}{c}{NMSE} \\
\cmidrule(lr){2-3} \cmidrule(lr){4-5}
$n$ & LAKER & GPRT & LAKER & GPRT \\
\midrule
50
& 1.6946
& \textbf{1.3785}
& 3.31e{-04}
& \textbf{2.19e{-04}} \\

200
& 1.1610
& \textbf{0.6956}
& 1.53e{-04}
& \textbf{5.48e{-05}} \\

500
& 0.7841
& \textbf{0.7483}
& 6.63e{-05}
& \textbf{6.04e{-05}} \\

1000
& \textbf{0.5240}
& 0.6921
& \textbf{3.04e{-05}}
& 5.31e{-05} \\

2000
& \textbf{0.6212}
& 0.7585
& \textbf{4.87e{-05}}
& 7.26e{-05} \\
\bottomrule
\end{tabular*}
\end{table}

\vspace{1mm}
\subsubsection{Summary of Results}
Tables~\ref{tab:opt_results} and~\ref{tab:recon_results} summarize numerical layer and reconstruction layer performance across all problem sizes.

\vspace{1mm}
\textit{Numerical Performance.}
As shown in Table~\ref{tab:opt_results}, the algorithm LAKER achieves solutions that are numerically consistent with the convex solver reference across all scales. The objective gap ranges from $3.71\times10^{-11}$ to $1.74\times10^{-7}$, while the linear system residual and prediction discrepancy reach $10^{-12}$ and $10^{-9}$. In contrast, the first-order method GD fails to reach the target accuracy at all problem sizes, with objective gaps exceeding $1$, indicating its limitation for ill-conditioned attention kernel systems. From a spectral perspective, the condition number of $\lambda I + G$ increases from $5.05\times10^3$ to $2.02\times10^5$ as $n$ grows from $50$ to $2000$, whereas the preconditioned system remains stable in the range $1.33\times10^2$ to $2.09\times10^2$. This stabilization leads to nearly size-independent convergence. The algorithm LAKER requires only $16$ to $30$ iterations to reach the target accuracy, while Jacobi PCG requires $21$ to $59$ iterations and exhibits clear growth with problem size. In terms of solver time measured at the PCG tolerance, LAKER and Jacobi PCG solve the original problem using preconditioned PCG, while the convex solver and GD report their original runtimes. LAKER consistently outperforms all baselines. Compared with the convex solver, it achieves over $22\times$ speedup at $n=2000$, and the advantage increases with problem size. LAKER also outperforms Jacobi PCG, whose runtime grows rapidly with $n$, and GD, which remains inefficient due to slow convergence.

\vspace{1mm}
\textit{Reconstruction Performance.}
Table~\ref{tab:recon_results} compares the radio map reconstruction accuracy of algorithm LAKER and algorithm GPRT across different problem sizes. At small scales, GPRT achieves lower RMSE and NMSE, with clear advantages at $n=50$ and $n=200$. At $n=500$, the two methods exhibit nearly identical performance, with GPRT slightly outperforming LAKER in both metrics. As the problem size increases, LAKER becomes more competitive and eventually surpasses GPRT. At $n=1000$ and $n=2000$, LAKER achieves lower RMSE and NMSE, indicating improved reconstruction accuracy in larger-scale settings. In particular, the RMSE of LAKER decreases from $1.6946$ at $n=50$ to $0.5240$ at $n=1000$, and remains stable at larger scales, while GPRT exhibits higher error at $n \ge 1000$. A similar trend is observed for NMSE. Overall, the algorithm GPRT performs better in low-data regimes, while the algorithm LAKER demonstrates superior accuracy at moderate to large scales, highlighting its effectiveness for large-scale radio map reconstruction.

\section{Conclusion}
\label{sec:con}
This paper addressed the computational challenges of regularized attention kernel regression in spectrum cartography, where exponential attention kernels induce severely ill-conditioned linear systems whose condition numbers grow linearly with problem size, rendering standard iterative solvers ineffective. To overcome this bottleneck, we developed LAKER, a learning-based algorithm that acquires a data-dependent preconditioner via a difference-of-convex formulation and integrates it with a preconditioned conjugate gradient solver. The learned preconditioner directly targets the inverse spectral structure of the attention kernel system, reducing condition numbers by up to three orders of magnitude and enabling nearly size-independent preconditioned conjugate gradient convergence across all tested scales. Extensive experiments on spectrum-cartography-inspired radio map reconstruction demonstrated that LAKER achieves numerical accuracy matching the convex solver reference solution across all problem sizes, with negligible objective gaps and near-zero residuals, while attaining speedups of over twenty-fold compared to the convex solver at the largest tested scale. LAKER also outperforms the Gaussian process regression baseline in reconstruction accuracy at larger problem sizes while maintaining a more predictable computational cost. These results establish that learning-based spectral preconditioning provides an effective and scalable approach to attention kernel regression in wireless sensing applications. Future work includes extending the algorithm LAKER to online and dynamic spectrum cartography settings where measurements arrive sequentially.


\bibliographystyle{IEEEtran}
\bibliography{Kernel}

\end{document}